\newtheorem{thm}{Theorem}[section]
\newtheorem{cor}[thm]{Corollary}
\newtheorem{lem}[thm]{Lemma}
\newtheorem{prp}[thm]{Proposition}
\newtheorem{theoremintro}{Theorem}
\theoremstyle{definition}
\newtheorem{dfn}[thm]{Definition}
\theoremstyle{remark}
\newtheorem{rmk}[thm]{Remark}
\numberwithin{equation}{section}
\newcommand{\KK}{\mathbb{K}}
\newcommand{\NN}{\mathbb{N}}
\newcommand{\TT}{\mathbb{T}}
\newcommand{\ZZ}{\mathbb{Z}}
\newcommand{\Bb}{\mathcal{B}}
\newcommand{\Hh}{\mathcal{H}}
\newcommand{\Kk}{\mathcal{K}}
\newcommand{\Ll}{\mathcal{L}}
\newcommand{\Uu}{\mathcal{U}}
\newcommand{\gr}{\operatorname{gr}}
\newcommand{\id}{\operatorname{id}}
\newcommand{\lt}{\operatorname{lt}}
\mathchardef\hyphen="2D
\providecommand{\leftsquigarrow}{%
  \mathrel{\mathpalette\reflect@squig\relax}%
}
\newcommand{\reflect@squig}[2]{%
  \reflectbox{$\m@th#1\rightsquigarrow$}%
}
\title{Amplified graph C*-algebras II: reconstruction}
\author[Eilers]{S{\o}ren Eilers}
\email[S. Eilers]{eilers@math.ku.dk}
\address[S. Eilers]{Department of Mathematical Sciences\\
University of Co\-penhagen\\
Universitetsparken 5\\
DK-2100 Copenhagen\\
Denmark}
\author[Ruiz]{Efren Ruiz}
\email[E. Ruiz]{ruize@hawaii.edu}
\address[E. Ruiz]{Department of Mathematics\\
University of Hawaii, Hilo\\
200W. Kawili St.\\
Hilo\\
Hawaii\\
96720-4091 USA}
\author[Sims]{Aidan Sims}
\email[A. Sims]{asims@uow.edu.au}
\address[A. Sims]{School of Mathematics and Applied Statistics\\
The University of Wollongong\\
NSW  2522\\
Australia}
\thanks{This research was supported by Australian Research Council Discovery Project DP200100155,
by DFF-Research Project 2 `Automorphisms and Invariants of Operator Algebras', no. 7014-00145B, and
by a Simons Foundation Collaboration Grant, \#567380}
\keywords{amplified graph, graph $C^*$-algebra}
\subjclass[2010]{Primary: 46L35}
\begin{document}

\begin{abstract}
Let $E$ be a countable directed graph that is amplified in the sense that whenever there is an
edge from $v$ to $w$, there are infinitely many edges from $v$ to $w$.  We show that $E$ can be
recovered from $C^*(E)$ together with its canonical gauge-action, and also from $L_\KK(E)$
together with its canonical grading.
\end{abstract}

\maketitle

\section{Introduction}

The purpose of this paper is to investigate the gauge-equivariant isomorphism question for
$C^*$-algebras of countable amplified graphs, and the graded isomorphism question for Leavitt path
algebras of countable amplified graphs. A directed graph $E$ is called an \emph{amplified} graph
if for any two vertices $v, w$, the set of edges from $v$ to $w$ is either empty or infinite.

The geometric classification (that is, classification by the underlying graph modulo the
equivalence relation generated by a list of allowable graph moves) of the $C^*$-algebras of
finite-vertex amplified graph $C^*$-algebras was completed in \cite{ERS-amplified}, and was an
important precursor to the eventual geometric classification of all finite graph $C^*$-algebras
\cite{ERRS-classification}. But there has been increasing recent interest in understanding
isomorphisms of graph $C^*$-algebras that preserve additional structure: for example the canonical
gauge action of the circle; or the canonical diagonal subalgebra isomorphic to the algebra of
continuous functions vanishing at infinity on the infinite path space of the graph; or the smaller
coefficient algebra generated by the vertex projections; or some combination of these (see, for
example, \cite{Brix, BCW, BLRS, CEOR, CRS, DEG, MM}).

A program of geometric classification for these various notions of isomorphism was initiated by
the first two authors in \cite{ER}. They discuss $\mathsf{xyz}$-isomorphism of graph
$C^*$-algebras, where $\mathsf{x}$ is 1 if we require exact isomorphism, and 0 if we require only
stable isomorphism; $\mathsf{y}$ is 1 if the isomorphism is required to be gauge-equivariant, and
0 otherwise; and $\mathsf{z}$ is 1 if the isomorphism is required to preserve the diagonal
subalgebra and 0 otherwise. They also identified a set of moves on graphs that preserve various
kinds of $\mathsf{xyz}$-isomorphism, and conjectured that for all $\mathsf{xyz}$ other than
$\mathsf{x10}$, the equivalence relation on graphs with finitely many vertices induced by
$\mathsf{xyz}$-isomorphism of $C^*$-algebras is generated by precisely those of their moves that
induce $\mathsf{xyz}$-isomorphisms.

This was an important motivation for the present paper. None of the moves in \cite{ER} takes an
amplified graph to an amplified graph. And although we know of one important instance where one
amplified graph can be transformed into another via a sequence of $\mathsf{101}$-preserving moves
passing through non-amplified graphs (see Diagram~\eqref{eq:ORRO} in Remark~\ref{rm:ER
conjecture}), we had given up on envisioning such a sequence consisting only of
$\mathsf{x1z}$-preserving moves. Based on the main conjecture of \cite{ER}, this led us to expect
that an amplified graph $C^*$-algebra together with its gauge action should remember the graph
itself.

Our main theorem shows that, indeed, any countable amplified graph $E$ can be reconstructed from
either the circle-equivariant $K_0$-group of its $C^*$-algbra, or the graded $K_0$-group of its
Leavitt path algebra over any field. That is:

\begin{theoremintro}\label{thm-main}
Let $E$ and $F$ be countable amplified graphs and let $\KK$ be a field.  Then the following are
equivalent:
\begin{enumerate}
\item\label{it:main graph iso} $E \cong F$;
\item\label{it:main alg K-th} there is a $\ZZ[x, x^{-1}]$-module order-isomorphism
    $K_0^{\gr}(L_{\KK}(E)) \cong K_0^{\gr}(L_{\KK}(F))$; and
\item\label{it:main C* K-th} there is a $\ZZ[x, x^{-1}]$-module order-isomorphism
    $K_0^{\TT}(C^*(E), \gamma)) \cong K_0^{\TT}(C^*(F), \gamma))$ of $\TT$-equivariant
    $K_0$-groups.
\end{enumerate}
\end{theoremintro}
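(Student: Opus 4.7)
The plan is to split the equivalence into the routine functorial implications $(1)\Rightarrow(2)$, $(1)\Rightarrow(3)$ and the two reconstruction implications $(2)\Rightarrow(1)$, $(3)\Rightarrow(1)$. For the functorial direction, an isomorphism of graphs $E\cong F$ induces a $\ZZ$-graded $\KK$-algebra isomorphism $L_\KK(E)\cong L_\KK(F)$ and a gauge-equivariant $C^*$-algebra isomorphism $C^*(E)\cong C^*(F)$, each of which induces a $\ZZ[x,x^{-1}]$-module order-isomorphism on the appropriate $K_0$. So the real content is in recovering $E$ from either invariant.

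My strategy is to unify (2) and (3) by first establishing that, for any amplified graph $E$, the ordered $\ZZ[x,x^{-1}]$-modules $K_0^{\gr}(L_\KK(E))$ and $K_0^{\TT}(C^*(E),\gamma)$ are canonically isomorphic and admit a common combinatorial description built directly from $E$. The natural description is as the module with generators $\{[p_v]\}_{v\in E^0}$, where the $\ZZ[x,x^{-1}]$-action encodes the gauge degree, and the order is generated by relations of the form $n\cdot x\cdot [p_w]\leq [p_v]$ for every $n\in\NN$ whenever there is an edge from $v$ to $w$ in $E$ (the amplification is precisely what allows the multiplier $n$ to be arbitrary). This step will use the crossed-product / skew-product-graph picture of $K_0^{\TT}$ on the $C^*$-side and the standard presentation of graded $K_0$ on the Leavitt path algebra side, with the amplified hypothesis simplifying both computations substantially.

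Once this concrete description is in hand, the reconstruction problem reduces to a purely order-theoretic/module-theoretic question: recover $(E^0,\text{edge relation})$ from the abstract ordered $\ZZ[x,x^{-1}]$-module. I would attempt to characterise the vertex classes $\{[p_v]:v\in E^0\}$ intrinsically as, for example, the positive elements $\eta$ of degree zero that are minimal in the sense that $\eta$ cannot be written as $\eta_1+\eta_2$ with $\eta_i$ positive and both non-zero in some natural quotient, together with a distinguishing property separating sinks (for which multiplication by $x$ is ``non-generating'') from infinite emitters (for which the order has an Archimedean defect: $n\cdot x\cdot \xi\leq \eta$ holds for all $n$). Sinks can be detected as those $[p_v]$ above which the only positive elements lie in the $\ZZ[x,x^{-1}]$-submodule generated by $[p_v]$ itself. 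From the recovered vertex set, the adjacency relation is read off from the order: $v\to w$ in $E$ iff $x\cdot [p_w]\leq [p_v]$. Since $E$ is amplified, this binary relation together with the indexed vertex set determines $E$ up to isomorphism.

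The hardest step will be the intrinsic identification of the vertex classes. The difficulty is that the module and its order are highly non-rigid: because each infinite emitter creates infinite-multiplicity order inequalities, arbitrarily many distinct positive elements dominate and are dominated by the $[p_v]$ in intricate ways, and positive elements can decompose in many non-obvious ways, particularly when $v$ and $w$ point to each other and hence their classes are interlinked by two-sided order inequalities $x[p_w]\leq [p_v]$ and $x[p_v]\leq [p_w]$. Threading a canonical characterisation of $\{[p_v]\}$ through this, uniformly across all amplified graphs and in a way that depends only on the abstract ordered module structure, is the crux; the remainder of the proof should then be formal.
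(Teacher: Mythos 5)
Your overall architecture matches the paper's: the implications (\ref{it:main graph iso})$\Rightarrow$(\ref{it:main alg K-th}) and (\ref{it:main graph iso})$\Rightarrow$(\ref{it:main C* K-th}) are routine, and your proposed unification of (\ref{it:main alg K-th}) and (\ref{it:main C* K-th}) is exactly what the paper does: both invariants are identified, as ordered $\ZZ[x,x^{-1}]$-modules, with $K_0$ of the skew-product graph $E \times_1 \ZZ$ (via \cite{AHLS} on the algebraic side, and Julg's theorem together with $C^*(E) \times_\gamma \TT \cong C^*(E \times_1 \ZZ)$ on the $C^*$-side). Your description of the positive cone by relations $n\cdot x[p_w] \le [p_v]$ for edges $v \to w$ is also correct in substance for amplified graphs, since every vertex is singular and so no Cuntz--Krieger relations intervene.

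The gap is in the reconstruction step, and it is not the difficulty you flag. You propose to characterise the set $\{[p_v] : v \in E^0\}$ intrinsically, for instance among ``positive elements of degree zero''. But the invariant is a bare ordered $\ZZ[x,x^{-1}]$-module: its elements carry no degrees, and no intrinsic characterisation of $\{[p_v]\}$ can exist, because that set is not preserved by automorphisms of the invariant. Multiplication by $x$ is an order-preserving module automorphism carrying $[p_v]$ to $x[p_v]$, and worse, when $E$ is disconnected one can multiply different components' direct summands by different powers of $x$. So at best you can recover the orbits $\{x^n[p_v] : n \in \ZZ\}$ --- equivalently, in the paper's picture, the hereditary sets $H(v,n)$ of $E \times_1 \ZZ$, which Proposition~\ref{prp:key} characterises as the order ideals with a unique predecessor. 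Your edge criterion ``$v \to w$ iff $x[p_w] \le [p_v]$'' is then valid only after choosing one representative per orbit \emph{coherently}: for $E^0 = \{v,w\}$ with infinitely many edges from $v$ to $w$, choosing representatives $[p_v]$ and $x^5[p_w]$, the test $x \cdot x^5 [p_w] \le [p_v]$ asks for a path of length $6$ from $v$ to $w$ and fails, so you would reconstruct a graph with no edges at all. Pinning down a coherent normalisation from the abstract order data alone is precisely the crux of the paper's argument --- Lemma~\ref{lem:H automatic} characterises the translates $\lt_n(H_0)$ order-theoretically for connected graphs, and Corollary~\ref{cor:H carried} assembles the disconnected case component by component --- and your proposal contains no counterpart to it. (Separately, your minimality test only detects sinks: for an infinite emitter $v$ with an edge to $w$, $[p_v] = ([p_v] - x[p_w]) + x[p_w]$ is a nontrivial positive decomposition; but this is secondary to the missing normalisation argument.)
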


We spell out a number of consequences of this theorem in Remark~\ref{rmk:field indep},
Theorem~\ref{thm:C*-consequences}, and Theorem~\ref{thm:LPA-consequences}. The headline is that
for amplified graphs, and for any $\mathsf{x, z}$, the graph $C^*$-algebras $C^*(E)$ and $C^*(F)$
are $\mathsf{x1z}$-isomorphic if and only if $E$ and $F$ are isomorphic. Combined with results of
\cite{AER-geometric, ERRS-classification}, this confirms \cite[Conjecture~5.1]{ER} for amplified
graphs (see Remark~\ref{rm:ER conjecture}).

Another immediate consequence is that, since ordered graded $K_0$ is an isomorphism invariant of
graded rings, and ordered $\TT$-equivariant $K_0$ is an isomorphism invariant of $C^*$-algebras
carrying circle actions, our theorem confirms a special case of Hazrat's conjecture: ordered
graded $K_0$ is a complete graded-isomorphism invariant for amplified Leavitt path algebras; and
we also obtain that ordered $\TT$-equivariant $K_0$ is a complete gauge-isomorphism invariant of
amplified graph $C^*$-algebras.

A third consequence is related to different graded stabilisations of Leavitt path algebras (and
different equivariant stabilisations of graph $C^*$-algebras). Each Leavitt path algebra has a
canonical grading, and, as alluded to above, significant work led by Hazrat has been done on
determining when graded K-theory completely classifies graded Leavitt path algebras. Historically,
in the classification program for $C^*$-algebras, significant progress has been made by first
considering classification up to stable isomorphism; so it is natural to consider the same
approach to Hazrat's graded classification question. But almost immediately, there is a
difficulty: which grading on $L_{\KK}(E) \otimes M_\infty(\KK)$ should we consider? It seems
natural enough to use the grading arising from the graded tensor product of the graded algebras
$L_{\KK}(E)$ and $M_\infty(\KK)$. But there are many natural gradings on $M_\infty(\KK)$: given
any $\overline{\delta} \in \prod_i \ZZ$, we obtain a grading of $M_\infty(\KK)$ in which the $m,n$
matrix unit is homogeneous of degree $\overline{\delta}_m - \overline{\delta}_n$. Different
nonzero choices for $\overline{\delta}$ correspond to different ways of stabilising $L_{\KK}(E)$
by modifying the graph $E$ (for example by adding heads \cite{Tomforde-Stab}), while taking
$\overline{\delta} = (0, 0, 0, \dots)$ corresponds to stabilising the associated groupoid by
taking its cartesian product with the (trivially graded) full equivalence relation $\NN \times
\NN$.

In Section~\ref{sec:stabilizations}, we show that for amplified graphs it doesn't matter what
value of $\overline{\delta}$ we pick. Specifically, using results of Hazrat, we prove that
$K_0^{\gr}(L_{\KK}(E) \otimes M_\infty(\KK)(\overline{\delta})) \cong K_0^{\gr}(L_\KK(E))$
regardless of $\overline{\delta}$. Consequently, for any choice of $\overline{\delta}$ we have
$L_{\KK}(E) \otimes M_\infty(\KK)(\overline{\delta}) \cong L_{\KK}(F) \otimes
M_\infty(\KK)(\overline{\delta})$ if and only if there exists a $\ZZ[x, x^{-1}]$-module
order-isomorphism $K_0^{\gr}(L_{\KK}(E)) \cong K_0^{\gr}(L_{\KK}(F))$. A similar result holds for
$C^*$-algebras with the gradings on Leavitt path algebras replaced by gauge actions on graph
$C^*$-algebras, and the gradings of $M_\infty(\KK)$ corresponding to different elements
$\overline{\delta}$ replaced by the circle actions on $\Kk(\ell^2)$ implemented by different
strongly continuous unitary representations of the circle on $\ell^2$.

We prove our main theorem in Section~\ref{sec:prof of main thm}. We use general results to see
that the graded $K_0$-group of $L_\KK(E)$ and the equivariant $K_0$-group of $C^*(E)$ are
isomorphic as ordered $\ZZ[x, x^{-1}]$-modules to the $K_0$-groups of the Leavitt path algebra and
the graph $C^*$-algebra (respectively) of the skew-product graph $E \times_1 \ZZ$. These are known
to coincide, and their lattice of order ideals (with canonical $\ZZ$-action) is isomorphic to the
lattice of hereditary subsets of $(E \times_1 \ZZ)^0$ with the $\ZZ$-action of translation in the
second variable. So the bulk of the work in Section~\ref{sec:prof of main thm} goes into showing
how to recover $E$ from this lattice. We then go on in Section~\ref{sec:stabilizations} to
establish the consequences of our main theorem for stabilizations. Here the hard work goes into
showing that $K_0^{\gr}(L_{\KK}(E) \otimes M_\infty(\KK)(\overline{\delta})) \cong
K_0^{\gr}(L_\KK(E))$ for any $\overline{\delta} \in \prod_i \ZZ$ and that $K_0^\TT(C^*(E) \otimes
\Kk, \gamma^E \otimes \operatorname{Ad}_u) \cong K_0^{\gr}(L_\KK(E))$ for any strongly continuous
unitary representation $u$ of $\TT$.

%
%

\section{Gauge-invariant classification of amplified graph \texorpdfstring{$C^*$}{C*}-algebras}\label{sec:prof of main thm}

Throughout the paper, a countable directed graph $E$ is a quadruple $E = (E^0, E^1, r, s)$ where
$E^0$ is a countable set whose elements are called \emph{vertices}, $E^1$ is a countable set whose
elements are called \emph{edges}, and $r, s \colon E^1 \to E^0$ are functions. We think of the
elements of $E^0$ as points or dots, and each element $e$ of $E^1$ as an arrow pointing from the
vertex $s(e)$ to the vertex $r(e)$. We follow the conventions of, for example
\cite{FLR-graphconv}, where a path is a sequence $e_1 \dots e_n$ of edges in which $s(e_{n+1}) =
r(e_n)$. This is not the convention used in Raeburn's monograph \cite{Rae-graphconv}, but is the
convention consistent with all of the Leavitt path algebra literature as well as much of the graph
$C^*$-algebra literature. In keeping with this, for $v, w \in E^0$ and $n \ge 0$, we define
\[
v E^1 = s^{-1}(v), \quad E^1 w = r^{-1}(w), \quad \text{and} \quad v E^1 w = s^{-1} (v) \cap r^{-1} (w).
\]
We will also write $vE^n$ for the sets of paths of length $n$ that are emitted by $v$, $E^n w$ for
the set of paths of length $n$ received by $w$, and $vE^n w$ for the set of paths of length $n$
pointing from $v$ to $w$.

A vertex $v$ is \emph{singular} if $vE^1$ is either empty or infinite, so $v$ is either a sink or
an infinite emitter; and for any edge $e$, we have $s_e^*s_e = p_{r(e)}$ and $p_{s(e)} \ge s_e
s^*_e$ in the graph $C^*$-algebra $C^*(E)$.  We will also consider the Leavitt path algebras,
$L_\KK(E)$ for any field $\KK$, the so-called algebraic cousin of graph $C^*$-algebras. Leavitt
path algebras are defined via generators and relations similar to those for graph $C^*$-algebras
(see \cite{AP}).

Countable directed graphs $E$ and $F$ are \emph{isomorphic}, denoted $E \cong F$, if there is a
bijection $\phi : E^0 \sqcup E^1 \to F^0 \sqcup F^1$ that restricts to bijections $\phi^0 \colon
E^0 \to F^0$ and $\phi^1 \colon E^1 \to F^1$ such that
\[
\phi^0 ( r (e) ) = r ( \phi^1 (e)) \quad \text{and} \quad \phi^0 ( s(e) ) = s ( \phi^1 (e)).
\]

In this paper, we consider amplified graphs. The classification of amplified graph $C^*$-algebras
was the starting point in the classification of unital graph $C^*$-algebras via moves (see
\cite{ERS-amplified} and \cite{ERRS-classification}).

\begin{dfn}[Amplified Graph and Amplified graph algebra] A directed graph $E$ is an
\emph{amplified graph} if for all $v, w \in E^0$, the set $v E^1 w = s^{-1}(v) \cap r^{-1} (w)$ is
either empty or infinite.  An \emph{amplified graph $C^*$-algebra} is a graph $C^*$-algebra of an
amplified graph and an \emph{amplified Leavitt path algebra} is a Leavitt path algebra of an
amplified graph.
\end{dfn}

Observe that in an amplified graph, every vertex is singular.

Recall that a set $H \subseteq E^0$ is \emph{hereditary} if $s(e) \in H$ implies $r(e) \in H$ for
every $e \in E^1$, and is \emph{saturated} if whenever $v$ is a regular vertex such that $r(vE^1)
\subseteq H$, we have $v \in H$. Again since every vertex in an amplified graph is singular, every
set of vertices is saturated.

Recall from \cite{KP-groupactions} that if $E$ is a directed graph, then the skew-product graph $E
\times_1 \ZZ$ is the graph with vertices $E^0 \times \ZZ$ and edges $E^1 \times \ZZ$ with $s(e,n)
= (s(e), n)$ and $r(e, n) = (r(e), n+1)$. If $E$ is an amplified graph, then so is $E \times_1
\ZZ$.

For a countable amplified graph, $E$, we write $\Hh(E \times_1 \ZZ)$ for the lattice (under set
inclusion) of hereditary subsets of the vertex-set of the skew-product graph $E \times_1 \ZZ$. The
action of $\ZZ$ on $E \times_1 \ZZ$ by given by $n \cdot (e, m) = (e, n+m)$ induces an action
$\lt$ of $\ZZ$ on $\Hh(E \times_1 \ZZ)$. There is also a distinguished element $H_0 \in \Hh(E
\times_1 \ZZ)$ given by $H_0 := \{(v, n) : v \in E^0, n \ge 0\} \subseteq (E \times_1 \ZZ)^0$.

Throughout this section, given $v \in E^0$ and $n \in \ZZ$, we write $H(v,n)$ for the smallest
hereditary subset of $(E \times_1 \ZZ)^0$ containing $(v,n)$. So $H(v, n) = \{(r(\mu), n + |\mu|)
: \mu \in vE^*\}$ is the set of vertices that can be reached from $(v, n)$ in $E \times_1 \ZZ$.

If $(\Ll, \preceq)$ is a lattice, we say that $L \in \Ll$ has a unique predecessor if there exists
$K \in \Ll$ such that $K \prec L$, and every $K'$ with $K' \prec L$ satisfies $K' \preceq K$. The
next proposition is the engine-room of our main result.

\begin{prp}\label{prp:key}
Let $E$ be a countable amplified graph. Define $\Hh_{\operatorname{vert}} \subseteq \Hh(E \times_1
\ZZ)$ to be the subset
\[
\Hh_{\operatorname{vert}} = \{H \in \Hh(E \times_1 \ZZ) : H \text{ has a unique predecessor}\}.
\]
Then $\Hh_{\operatorname{vert}} = \{H(v,n) : v \in E^0\text{ and }n \in \ZZ\}$. Let
\[
\overline{E}^0 := \{H \in \Hh_{\operatorname{vert}} : H \subseteq H_0\text{ and }H \not\subseteq \lt_1(H_0)\}.
\]
Define $\overline{E}^1 := \{(H, n, K) : H, K \in \overline{E}^0, \lt_1(K) \subseteq H,\text{ and
}n \in \NN\}$. Define $\bar{s}, \bar{r} : \overline{E}^1 \to \overline{E}^0$ by $\bar{s}(H, n, K)
= H$ and $\bar{r}(H, n, K) = K$. Then $\overline{E} := (\overline{E}^0, \overline{E}^1, \bar{r},
\bar{s})$ is a countable amplifed directed graph, and there is an isomorphism $E \cong
\overline{E}$ that carries each $v \in E^0$ to the hereditary subset of $(E \times_1 \ZZ)^0$
generated by $(v,0)$.
\end{prp}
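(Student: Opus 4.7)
The plan is to decompose the proof into three tasks: (a) identifying $\Hh_{\operatorname{vert}}$ with $\{H(v,n) : v \in E^0, n \in \ZZ\}$; (b) matching $\overline{E}^0$ bijectively with $E^0$ via $v \mapsto H(v,0)$; and (c) matching the edge sets. Throughout I would use that every vertex of the amplified graph $E \times_1 \ZZ$ is singular, so saturation is vacuous, and $\Hh(E \times_1 \ZZ)$ consists of all subsets $H$ of $(E \times_1 \ZZ)^0$ such that $(v,n) \in H$ implies $(r(\mu), n + |\mu|) \in H$ for every $\mu \in vE^*$.

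For task (a), I would first verify that every $H(v,n)$ has a unique predecessor. The set $L := H(v,n) \setminus \{(v,n)\} = \bigcup_{e \in vE^1} H(r(e), n+1)$ is hereditary and properly contained in $H(v,n)$, and any proper hereditary subset of $H(v,n)$ cannot contain $(v,n)$, for otherwise it would contain all of $H(v,n)$; hence every such subset lies in $L$. For the converse direction, suppose $H$ is hereditary with unique predecessor $K$. For any $(v,n) \in H \setminus K$, the set $K \cup H(v,n)$ is hereditary, lies in $H$, and strictly contains $K$, so the defining property of $K$ forces $K \cup H(v,n) = H$. If there existed a second point $(v',n') \in H \setminus K$ distinct from $(v,n)$, the same reasoning would give $K \cup H(v',n') = H$, whence $(v',n') \in H(v,n)$ and $(v,n) \in H(v',n')$; these inclusions produce paths from $v$ to $v'$ and from $v'$ to $v$ of lengths $n' - n$ and $n - n'$ respectively, which is impossible unless $(v,n) = (v',n')$. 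Thus $H \setminus K = \{(v,n)\}$; and since assuming $H(v,n) \subsetneq H$ would force $H(v,n) \subseteq K$ and contradict $(v,n) \notin K$, we conclude $H = H(v,n)$.

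For task (b), the condition $H(v,n) \subseteq H_0$ is equivalent to $n \ge 0$ (every vertex of $H(v,n)$ has second coordinate at least $n$, and $(v,n) \in H(v,n)$), while $H(v,n) \not\subseteq \lt_1(H_0)$ means $H(v,n)$ contains a vertex with second coordinate at most $0$, which forces $n \le 0$. Hence $\overline{E}^0 = \{H(v,0) : v \in E^0\}$, and $v \mapsto H(v,0)$ is a bijection from $E^0$ onto $\overline{E}^0$ (injective because $(v,0) \in H(v',0)$ forces a length-$0$ path from $v'$ to $v$). For task (c), I would observe that $\lt_1(H(w,0)) = H(w,1)$ and, by heredity, $H(w,1) \subseteq H(v,0)$ iff $(w,1) \in H(v,0)$ iff $vE^1 w \neq \emptyset$. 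Since $E$ is amplified and countable, $vE^1 w$ is empty or countably infinite, and in the latter case the set $\{(H(v,0), n, H(w,0)) : n \in \NN\}$ of edges in $\overline{E}$ is also countably infinite; this also shows $\overline{E}$ is amplified. Choosing, for each ordered pair $(v,w)$ with $vE^1 w \neq \emptyset$, a bijection $vE^1 w \to \NN$, $e \mapsto n_e$, and sending $e \mapsto (H(s(e),0), n_e, H(r(e),0))$ yields an edge bijection which, together with $v \mapsto H(v,0)$, intertwines sources and ranges. The principal obstacle is the singleton conclusion in task (a): the essential input there is the antisymmetric-path observation that forbids two distinct vertices of $H \setminus K$ from both generating, modulo $K$, the whole of $H$.
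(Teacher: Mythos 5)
Your proof is correct. For your tasks (b) and (c) it follows essentially the same route as the paper: the same observations that $H(v,n)\subseteq H_0$ exactly when $n\ge 0$ and $H(v,n)\subseteq\lt_1(H_0)$ exactly when $n\ge 1$, the same appeal to acyclicity of $E\times_1\ZZ$ to see that $v\mapsto H(v,0)$ is injective, and the same cardinality argument ($|H(v,0)\,\overline{E}^1\,H(w,0)| = |vE^1w|$, both countably infinite or both empty) followed by an arbitrary choice of bijections between edge sets. The genuine difference is task (a): the paper offers no direct argument for the identification $\Hh_{\operatorname{vert}}=\{H(v,n): v\in E^0,\ n\in\ZZ\}$, instead invoking the argument of Lemma~5.2 of \cite{ERS-amplified}, whereas you prove it from scratch. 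Your argument there is sound: $H(v,n)\setminus\{(v,n)\}$ is hereditary (heredity only produces vertices of strictly larger second coordinate, so $(v,n)$ is never forced back in) and it absorbs every proper hereditary subset of $H(v,n)$, since any hereditary set containing $(v,n)$ must be all of $H(v,n)$; conversely, for $H$ with unique predecessor $K$, your key step that $K\cup H(v,n)=H$ for each $(v,n)\in H\setminus K$, combined with the antisymmetry coming from acyclicity (paths of lengths $n'-n$ and $n-n'$, both necessarily nonnegative, force $(v,n)=(v',n')$), pins down $H\setminus K$ as a singleton and then $H=H(v,n)$. What your route buys is self-containedness, so the reader need not consult \cite{ERS-amplified}; what the paper's citation buys is brevity. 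Either way, the proposition is fully established.
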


\begin{proof}
The argument of \cite[Lemma~5.2]{ERS-amplified} shows that $\Hh_{\operatorname{vert}} = \{H(v, n)
: v \in E^0, n \in \ZZ\}$.

We clearly have $H(v, n) \subseteq H_0$ if and only if $n \ge 0$, and $H(v, n) \subseteq
\lt_1(H_0)$ if and only if $n \ge 1$, so $\overline{E}^0 = \{H(v, 0) : v \in E^0\}$. Since $E
\times_1 \ZZ$ is acyclic, the $H(v, 0)$ are distinct, and we deduce that $\theta^0 : v \mapsto
H(v, 0)$ is a bijection from $E^0$ to $\overline{E}^0$.

Fix $v, w \in E^0$. We have $\lt_1(H(w, 0)) = H(w, 1)$, and since $(w, 1) \in H(v, 0)$ if and only
if $v E^1 w \not= \emptyset$, we have $H(w, 1) \subseteq H(v, 0)$ if and only if $v E^1 w \not=
\emptyset$, in which case $v E^1 w$ is infinite because $E$ is amplified. It follows that $|H(v,
0) \overline{E}^1 H(w, 0)| = |v E^1 w|$ for all $v,w$, so we can choose a bijection $\theta^1 :
E^1 \to \overline{E}^1$ that restricts to bijections $v E^1 w \to \theta^0(v) \overline{E}^1
\theta^0(w)$ for all $v,w \in E^0$. The pair $(\theta^0, \theta^1)$ is then the desired
isomorphism $E \cong \overline{E}$.
\end{proof}

In order to use Proposition~\ref{prp:key} to prove Theorem~\ref{thm-main}, we need to know that if
$(\Hh(E \times_1 \ZZ), \lt^E)$ is order isomorphic to $(\Hh(F \times_1 \ZZ), \lt^F)$ then there is
an isomorphism from $(\Hh(E \times_1 \ZZ), \lt^E)$ to $(\Hh(F \times_1 \ZZ), \lt^F)$ that carries
$H^E_0$ to $H^F_0$. We do this by showing that if $E$ is connected, then we can recognise the sets
$\lt_n(H_0)$ amongst all the hereditary subsets of $(E \times_1 \ZZ)^0$ using just the
order-structure and the action $\lt$.

Recalling that $v E^n w$ denotes the set of paths of length $n$ from $v$ to $w$, we have
\begin{equation}\label{eq:connections}
    H(w, n) \subseteq H(v, m)\quad\text{ if and only if }\quad v E^{n-m} w \not= \emptyset.
\end{equation}

Recall that a graph $E$ is said to be \emph{connected} if the smallest equivalence relation on
$E^0$ containing $\{(s(e), r(e)) : e \in E^1\}$ is all of $E^0 \times E^0$.

Let $E$ be a connected, countable amplified graph. The set $V_0 := \{H(v,0) : v \in E^0\}$ is
exactly the set of maximal elements of the collection $\{H \in \Hh_{\operatorname{vert}} : H
\subseteq H_0\}$. The sets $H_0$ and $V_0$ have the following properties:
\begin{itemize}
    \item for each $H \in \Hh_{\operatorname{vert}}$ there is a unique $n \in \ZZ$ such that
        $\lt_n(H) \in V_0$;
    \item the smallest equivalence relation on $V_0$ containing $\{(H,K) : \lt_1(K) \subseteq
        H\}$ is all of $V_0 \times V_0$; and
    \item if $H, K$ are distinct elements of $V_0$, and if $n \ge 0$, then $H \not\subseteq
        \lt_n(K)$.
\end{itemize}
The next lemma shows that for connected graphs, these properties characterise $H_0$ up to
translation.

\begin{lem}\label{lem:H automatic}
Suppose that $E$ is a connected, countable amplified graph. Take $H \in \Hh(E \times_1 \ZZ)$, and
let $V_H$ be the set of maximal elements of $\{K \in \Hh_{\operatorname{vert}} : K \subseteq H\}$
with respect to set inclusion. Suppose that
\begin{enumerate}
    \item for each $K \in \Hh_{\operatorname{vert}}$ there is a unique $n \in \ZZ$ such that
        $\lt_n(K) \in V_H$;
    \item the smallest equivalence relation on $V_H$ containing $\{(H,K) : \lt_1(K) \subseteq
        H\}$ is all of $V_H \times V_H$; and
    \item if $K, K'$ are distinct elements of $V_H$, and if $n \ge 0$, then $K \not\subseteq
        \lt_n(K')$.
\end{enumerate}
Then there exists $n \in \ZZ$ such that $H = \lt_n(H_0)$.
\end{lem}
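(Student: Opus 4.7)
My plan is to extract from $V_H$ a function $n \colon E^0 \to \ZZ$, show using (2), (3) and the connectedness of $E$ that $n$ is constant, and then verify that $H$ coincides with the corresponding translate of $H_0$.

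The first task is to identify $V_H$. By Proposition~\ref{prp:key}, $\Hh_{\operatorname{vert}}$ consists precisely of the sets $H(v, m)$ with $v \in E^0$ and $m \in \ZZ$. Applying hypothesis~(1) with $K = H(v, 0)$ singles out for each $v$ a unique $n(v) \in \ZZ$ with $H(v, n(v)) \in V_H$, and the uniqueness clause forbids any other shift of $H(v, \cdot)$ from lying in $V_H$; so $V_H = \{H(v, n(v)) : v \in E^0\}$ for a well-defined function $n$. Next I would use (3) to show that $n$ is non-increasing along edges of $E$: for an edge $v \to w$ with $v \ne w$, applying (3) to $K = H(w, n(w))$ and $K' = H(v, n(v))$ with arbitrary $n \ge 0$ forbids any path of length $\le n(w) - n(v)$ from $v$ to $w$, while the edge itself supplies a path of length $1$, so $n(w) \le n(v)$.

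I would then promote this to constancy of $n$ using (2). For distinct $A = H(v, n(v))$ and $B = H(w, n(w))$ in $V_H$, the pair $(A, B)$ lies in the relation $R := \{(H, K) : \lt_1(K) \subseteq H\}$ exactly when $E$ admits a path of length $n(w) + 1 - n(v)$ from $v$ to $w$. Existence of such a positive-length path forces $n(v) \le n(w)$, while the edge-by-edge bound from the previous step forces $n(w) \le n(v)$; hence any pair in $R$ has equal $n$-values, and so does any pair in its equivalence closure. By (2) combined with the connectedness of $E$, this closure is all of $V_H \times V_H$, and so $n \equiv n_0$ for some constant $n_0 \in \ZZ$.

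It remains to identify $H$ with $\lt_{n_0}(H_0)$. One inclusion is relatively straightforward: $V_H \subseteq \{K \in \Hh_{\operatorname{vert}} : K \subseteq H\}$ gives $\bigcup_{v \in E^0} H(v, n_0) \subseteq H$, and in a connected amplified graph one checks that this union coincides with $\lt_{n_0}(H_0)$. For the reverse inclusion, given $(v, k) \in H$, I would argue that $H(v, k) \in \{K \in \Hh_{\operatorname{vert}} : K \subseteq H\}$ must be dominated by some maximal element $H(u, n_0) \in V_H$, which forces a path of length $k - n_0$ from $u$ to $v$ and hence $k \ge n_0$. The hard part will be this last domination argument: Zorn's lemma does not apply directly to $\{K \in \Hh_{\operatorname{vert}} : K \subseteq H\}$, since strictly-increasing chains $H(u_0, m_0) \subsetneq H(u_1, m_1) \subsetneq \cdots$ with $m_i \to -\infty$ (corresponding to infinite backward paths in $E$) may admit no upper bound inside $\Hh_{\operatorname{vert}}$, so closing this gap will require a careful joint use of hypotheses~(1)--(3) together with the amplified structure of $E$.
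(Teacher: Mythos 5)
Your first two steps are correct, and together they constitute the paper's \emph{entire} proof of Lemma~\ref{lem:H automatic}. The identification $V_H = \{H(v,n_v) : v \in E^0\}$ via hypothesis~(1) is the paper's opening paragraph, and your constancy argument uses the same two ingredients in the same roles as the paper's: the paper fixes $u \in E^0$, uses hypothesis~(3) to show there is no edge from $L_u = \{v : n_v < n_u\}$ into $G_u = \{w : n_w \ge n_u\}$ (your edge-monotonicity step), and then applies hypothesis~(2) to the partition of $V_H$ induced by $L_u \sqcup G_u$, where you instead observe that ``equal $n$-value'' is an equivalence relation containing the generating relation. The packaging differs slightly, but it is the same argument.

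The surprise is your third step: the paper omits it entirely. Its proof ends once $n_v \equiv n_0$ is established, implicitly treating $V_H = \{H(v,n_0) : v \in E^0\}$ as equivalent to $H = \lt_{n_0}(H_0)$. That implication is false, and in fact the lemma as stated is false, so the gap you flagged genuinely cannot be closed from hypotheses (1)--(3). Let $E$ have $E^0 = \{v,w\}$ with $vE^1w$ infinite and no other edges (connected, countable, amplified), and put
\[
H = \{(v,0),\, (w,0),\, (w,1)\}.
\]
Then $H$ is hereditary, $\{K \in \Hh_{\operatorname{vert}} : K \subseteq H\} = \{H(v,0), H(w,0), H(w,1)\}$, and $V_H = \{H(v,0), H(w,0)\}$, since $H(w,1) \subseteq H(v,0)$ while $H(w,0) \not\subseteq H(v,0) = \{(v,0),(w,1)\}$. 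Hypothesis~(1) holds: for each $x \in E^0$ the unique translate of $H(x,0)$ lying in $V_H$ is the one at level $0$. Hypothesis~(2) holds because $\lt_1(H(w,0)) = H(w,1) \subseteq H(v,0)$, so the generating relation already links the two elements of $V_H$. Hypothesis~(3) holds because, by~\eqref{eq:connections}, $H(w,0) \subseteq \lt_n(H(v,0))$ would force $n = -1$. Yet $H$ is finite, so it equals no $\lt_n(H_0)$. This same example refutes the identity $\bigcup_v H(v,n_0) = \lt_{n_0}(H_0)$ that you invoke for the easy inclusion: here that union is exactly $H$, because the source $v$ receives no paths of positive length. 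Moreover, the obstruction you identified for the hard inclusion (undominated ascending chains, i.e.\ infinite backward paths) is realised even in source-free graphs: for the graph with vertices $v_0, v_1, v_2, \dots$ and infinitely many edges from $v_{a+1}$ to $v_a$ for each $a$, the hereditary set $H_0 \cup \{(v_a, -1-a) : a \ge 0\}$ satisfies (1)--(3) but is no translate of $H_0$. So your step 3 is not merely the hard part; it is impossible, and the lemma needs strengthened hypotheses before any proof can complete it. (The issue propagates: in Corollary~\ref{cor:H carried} the paper asserts that $H_0$ and $V_0$ satisfy (1)--(3), and this too fails when $E$ has a source $u$, since then $V_{H_0}$ contains $H(u,n)$ for every $n \ge 0$ and uniqueness in~(1) fails.)
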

\begin{proof}
For each $v \in E^0$, item~(1) applied to $K = H(v,0)$ shows that there exists a unique $n_v \in
\ZZ$ such that $H(v, n_v) = \lt_{n_v}(K) \in V_H$. So $V_H = \{H(v, n_v) : v \in E^0\}$. We must
show that $n_v = n_w$ for all $v,w \in E^0$. To do this, it suffices to show that for any $u \in
E^0$, we have $n_w \ge n_u$ for all $w \in E^0$.

So fix $u \in E^0$. Define
\[
L_u := \{v \in E^0 : n_v < n_u\}\quad\text{ and }\quad
G_u := \{w \in E^0 : n_w \ge n_u\}
\]
We prove that if $v \in L_u$ and $w \in G_u$, then
\begin{equation}\label{eq:no containment}
\lt_1(H (v, n_v) ) \not\subseteq H(w, n_w)
    \quad\text{ and }\quad
\lt_1(H (w, n_w)) \not\subseteq H (v, n_v).
\end{equation}
For this, fix $v \in L_u$ and $w \in G_u$; note that in particular $v \not= w$.

To see that $\lt_1(H (v, n_v)) \not\subseteq H(w, n_w)$, suppose otherwise for contradiction. Then
$H (v, n_v+1) \subseteq H(w, n_w)$. Hence~\eqref{eq:connections} shows that $w E^{n_v + 1 - n_w}
v \not= \emptyset$, which forces $n_v \ge n_w - 1$. Since $v \in L_u$ and $w \in G_u$, we also
have $n_v \le n_w - 1$, and we conclude that $n_v + 1 - n_w = 0$. This forces $w E^0 v \not=
\emptyset$, contradicting that $v \not= w$.

To see that $\lt_1(H(w, n_w)) \not\subseteq H(v, n_v)$, we first claim that there is no $e \in
E^1$ satisfying $s(e) \in L_u$ and $r(e) \in G_u$. To see this, fix $x \in L_u$ and $y \in G_u$.
Then $n_y > n_x$, and in particular $n_y - 1 - n_x \ge 0$. Hence Item~(3) shows that $H(y, n_y)
\not\subseteq \lt_{n_y - 1 - n_x}(H(x, n_x))$. Applying $\lt_{1-n_y}$ on both sides shows that
$\lt_1(H(y, 0)) \not\subseteq H(x, 0)$, and so $x E^1 y = \emptyset$. This proves the claim.

Since $v \in L_u$, applying the claim $n_w + 1 - n_v$ times shows that for any path $\mu \in
vE^{n_w + 1 - n_v}$, we have $r(\mu) \in L_u$. In particular, $v E^{n_w + 1 - n_v} w = \emptyset$.
Thus~\eqref{eq:connections} implies that $\lt_1(H(w, n_w)) \not\subseteq H(v, n_v)$.

We have now established~\eqref{eq:no containment}. Set
\[
\overline{L}_u = \{ H( v, n_v ) : v \in L_u \} \quad \text{and} \quad \overline{G}_u = \{ H( w, n_w ): w \in G_u \}.
\]
Then~\eqref{eq:no containment} shows that $(\overline{L}_u \times \overline{L}_u) \sqcup
(\overline{G}_u \times \overline{G}_u)$ is an equivalence relation on $V_H$ containing $\{(H,K) :
\lt_1(K) \subseteq H\}$. Thus item~(2) implies that either $\overline{L}_u$ or $\overline{G}_u$ is
empty. Since $H(u, n_u) \in \overline{G}_u$, we deduce that $\overline{L}_u = \emptyset$ which
implies that $L_u = \emptyset$.  Hence $G_u = E^0$, and so $n_w \ge n_u$ for all $w \in E^0$ as
required.
\end{proof}

\begin{cor}\label{cor:H carried}
Suppose that $E$ and $F$ are amplified graphs. If there exists an isomorphism $\rho : (\Hh(E
\times_1 \ZZ), \subseteq, \lt^E) \cong (\Hh(F \times_1 \ZZ), \subseteq, \lt^F)$, then there exists
an isomorphism $\overline{\rho} : (\Hh(E \times_1 \ZZ), \subseteq, \lt^E) \to (\Hh(F \times_1
\ZZ), \subseteq, \lt^F)$ such that $\overline{\rho}(H^E_0) = H^F_0$.
\end{cor}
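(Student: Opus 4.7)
The plan is to reduce to the connected case handled by Lemma~\ref{lem:H automatic}, by decomposing $E$ and $F$ into their connected components. Writing $E = \bigsqcup_{i \in I} E_i$ and $F = \bigsqcup_{j \in J} F_j$ with each $E_i$ and $F_j$ connected, no edge of $E \times_1 \ZZ$ crosses between distinct components, so $\Hh(E \times_1 \ZZ) \cong \prod_{i \in I} \Hh(E_i \times_1 \ZZ)$ as complete lattices carrying the diagonal $\ZZ$-action, and under this identification $H_0^E$ corresponds to the tuple $(H_0^{E_i})_{i \in I}$; the analogous statement holds for $F$.

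The next task is to identify the connected components purely from the lattice structure and the $\ZZ$-action. A hereditary subset of $(E \times_1 \ZZ)^0$ is complemented in the lattice of hereditary subsets and $\lt$-invariant if and only if it has the form $\bigsqcup_{i \in I'} E_i^0 \times \ZZ$ for some $I' \subseteq I$; the minimal nonempty such sets are precisely $\{E_i^0 \times \ZZ : i \in I\}$. Since both being complemented and being $\lt$-invariant are preserved by $\rho$, it induces a bijection $\sigma : I \to J$ with $\rho(E_i^0 \times \ZZ) = F_{\sigma(i)}^0 \times \ZZ$. Because order isomorphisms of complete lattices preserve arbitrary meets and joins, $\rho$ decomposes accordingly as a product of order-and-$\ZZ$-equivariant isomorphisms $\rho_i : \Hh(E_i \times_1 \ZZ) \to \Hh(F_{\sigma(i)} \times_1 \ZZ)$ between the corresponding principal ideals.

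For each $i$, the three bullet points preceding Lemma~\ref{lem:H automatic} hold for $H_0^{E_i}$ in $\Hh(E_i \times_1 \ZZ)$ because $E_i$ is connected. These conditions are formulated purely in terms of the lattice order and the $\ZZ$-action, so they transfer along $\rho_i$ to $\rho_i(H_0^{E_i})$. Applying Lemma~\ref{lem:H automatic} to the connected graph $F_{\sigma(i)}$ then yields $n_i \in \ZZ$ with $\rho_i(H_0^{E_i}) = \lt_{n_i}^{F_{\sigma(i)}}(H_0^{F_{\sigma(i)}})$. I would finally define $\overline{\rho} := \prod_{i \in I} \bigl(\lt_{-n_i}^{F_{\sigma(i)}} \circ \rho_i\bigr)$. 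This is an order isomorphism (each factor is), it carries $H_0^E$ to $H_0^F$ by construction, and it is $\ZZ$-equivariant because the translations $\lt_{-n_i}$ on the distinct components all commute with the diagonal $\lt$-action.

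The main obstacle I anticipate is the intrinsic identification of the components from $(\Hh(E \times_1 \ZZ), \subseteq, \lt)$ and the verification that $\rho$ intertwines the resulting product decompositions; once this bookkeeping is done, the componentwise application of Lemma~\ref{lem:H automatic} and the rescaling by the $\lt_{-n_i}$ are routine.
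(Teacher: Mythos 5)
Your proposal is correct, and its overall skeleton---reduce to the connected case, apply Lemma~\ref{lem:H automatic} component by component, then correct by the translations $\lt_{-n_i}$---is the same as the paper's. Where you genuinely differ is in the mechanism for showing that $\rho$ respects the decomposition into weakly connected components. The paper works with the principal sets $H(v,n) \in \Hh_{\operatorname{vert}}$: it encodes reachability order-theoretically ($vE^*w \not= \emptyset$ if and only if $\bigcup_n \lt_n(H(w,i)) \subseteq \bigcup_n \lt_n(H(v,j))$), transfers this relation through $\rho$, and then runs an induction to build the bijection $\tilde\rho$ between components, afterwards restricting $\rho$ to the resulting $\lt$-invariant, mutually incomparable sublattices. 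You instead characterise the unions of full components intrinsically as the complemented $\lt$-invariant elements of the lattice and take the minimal nonempty ones; this is slicker, avoids the induction, and (via preservation of binary meets and arbitrary joins) immediately gives that $\rho$ intertwines the product decompositions, which is exactly the bookkeeping you flagged. The one assertion you leave unproved---that a complemented $\lt$-invariant hereditary set $H$ is a union of full components---does need an argument, but a short one suffices: edges leaving $H$ stay in $H$ by heredity and $\lt$-invariance of $H$; and if $(v,n) \in H$, $e \in E^1$ has $r(e) = v$, $s(e) = w$, and $(w,m)$ lay in the (hereditary) complement $K$, then heredity of $K$ would give $(v,m+1) \in K$ while $\lt$-invariance of $H$ gives $(v,m+1) \in H$, contradicting $H \cap K = \emptyset$. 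With that observation supplied, your argument is complete, and each approach buys what you would expect: the paper's is more hands-on but self-contained at the level of the sets $H(v,n)$ it has already analysed, while yours isolates a clean lattice-theoretic invariant (complementation) that does the component bookkeeping for free.
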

\begin{proof}
First suppose that $E$ and $F$ are connected as in Lemma~\ref{lem:H automatic}. Since $H \in
\Hh_{\operatorname{vert}}^E$ if and only if $H$ has a unique predecessor in $\Hh(E \times_1 \ZZ)$
and similarly for $F$, the map $\rho$ restricts to an inclusion-preserving bijection $\rho :
\Hh_{\operatorname{vert}}^E \to \Hh_{\operatorname{vert}}^F$. Since $H^E_0$ and $V^E_0$ satisfy
(1)--(3) of Lemma~\ref{lem:H automatic}, so do $\rho(H^E_0)$ and $\{\rho(H) : H \in V^E_0\}$. So
Lemma~\ref{lem:H automatic} shows that $\rho(H^E_0) = \lt_n(H^F_0)$ for some $n \in \ZZ$, and
therefore $\overline{\rho} := \lt_{-n} \circ \rho$ is the desired isomorphism.

Now suppose that $E$ and $F$ are not connected. Let $\mathcal{WC}(E)$ denote the set of
equivalence classes for the equivalence relation on $E^0$ generated by $\{(s(e), r(e)) : e \in
E^1\}$; so the elements of $\mathcal{WC}(E)$ are the weakly connected components of $E$.
Similarly, let $\mathcal{WC}(F)$ be the set of weakly connected components of $F$.

Using that $v E^* w$ is nonemtpy if and only if $\lt_n(H(w,0)) \subseteq H(v,0)$ for some $n \in
\ZZ$, we see that $v E^* w \not= \emptyset$ if and only if $\bigcup_n \lt_n(H(w, i)) \subseteq
\bigcup_n \lt_n(H(v, j))$ for some (equivalently for all) $i,j \in \ZZ$. Since the same is true in
$F$, we see that for $v,w \in E^0$, writing $x,y \in F^0$ for the elements such that $\rho(H(v,
0)) \in \lt_\ZZ(H(x, 0))$ and $\rho(H(w, 0)) \in \lt_\ZZ(H(y,0))$, we have $v E^* w \not=
\emptyset$ if and only if $x F^* y \not= \emptyset$. Now an induction shows that there is a
bijection $\tilde{\rho} : \mathcal{WC}(E) \to \mathcal{WC}(F)$ such that for each $C \in
\mathcal{WC}(E)$, we have $\rho(\{H(v, n) : v \in C, n \in \ZZ\}) = \{H(w, m) : w \in
\tilde{\rho}(C), m \in \ZZ\}$. For each $C \in \mathcal{WC}(E)$, write $E_C$ for the subgraph $(C,
CE^1 C, r, s)$ of $E$ and similarly for $F$. Then the inclusions $E_C \hookrightarrow E$ induce
inclusions $(H(E_C \times_1 \ZZ), \lt) \hookrightarrow (H(E \times_1 \ZZ), \lt)$ whose ranges are
$\lt$-invariant and mutually incomparable with respect to $\subseteq$. Hence $\rho$ induces
isomorphisms $\rho_C : (\Hh(E_C \times_1 \ZZ), \lt) \cong (\Hh(F_{\tilde{\rho}(C)} \times_1 \ZZ),
\lt)$. The first paragraph then shows that for each $C \in \mathcal{WC}(E)$ there is an
isomorphism $\overline{\rho}_C : (\Hh(E_C \times_1 \ZZ), \lt) \to (\Hh(F_{\tilde{\rho}(C)}
\times_1 \ZZ), \lt)$ that carries $H^{E_C}_0$ to $H^{F_{\tilde\rho(C)}}_0$, and these then
assemble into an isomorphsm $\overline{\rho} : (\Hh(E \times_1 \ZZ), \subseteq, \lt^E) \to (\Hh(F
\times_1 \ZZ), \subseteq, \lt^F)$ such that $\overline{\rho}(H^E_0) = H^F_0$.
\end{proof}

We are now ready to prove Theorem~\ref{thm-main}.

\begin{proof}[Proof of Theorem~\ref{thm-main}]
That (\ref{it:main graph iso})~implies~(\ref{it:main alg K-th}) and that (\ref{it:main graph
iso})~implies~(\ref{it:main C* K-th}) are clear.

By \cite[Proposition~5.7]{AHLS} the graded $\mathcal{V}$-monoid
$\mathcal{V}^{\operatorname{gr}}(L_{\KK}(E))$ is isomorphic to the $\mathcal{V}$-monoid
$\mathcal{V}(L_{\KK}(E \times_1 \ZZ))$, and that this isomorphism is equivariant for the canonical
$\ZZ[x, x^{-1}]$ actions arising from the grading on $\mathcal{V}^{\operatorname{gr}}(L_{\KK}(E))$
and from the action on $\mathcal{V}(L_{\KK}(E \times_1 \ZZ))$ induced by translation in the
$\ZZ$-coordinate in $E \times_1 \ZZ$. Hence $K^{\gr}_0(L_{\KK}(E))$ is order isomorphic to
$K_0(L_{\KK}(E \times_1 \ZZ))$ as $\ZZ[x, x^{-1}]$-modules. Hence condition~(\ref{it:main alg
K-th}) holds if and only if $K_0(L_{\KK}(E \times_1 \ZZ)) \cong K_0(L_{\KK}(F \times_1 \ZZ))$ as
ordered $\ZZ[x, x^{-1}]$-modules.

Likewise \cite[Theorem~2.7.9]{NCP-LectureNotes} shows that the equivariant $K$-theory group
$K^{\TT}_0(C^*(E))$ is order isomorphic, as a $\ZZ[x, x^{-1}]$-module, to the $K_0$-group
$K_0(C^*(E) \times_\gamma \TT)$. The canonical isomorphism $C^*(E) \times_\gamma \TT \cong C^*(E
\times_1 \ZZ)$ is equivariant for the dual action $\hat\gamma$ of $\ZZ$ on the former and the
action of $\ZZ$ on the latter induced by translation in $E \times_1 \ZZ$. It therefore induces an
isomorphism $K_0(C^*(E) \times_\gamma \TT) \cong K_0(C^*(E \times_1 \ZZ))$  of ordered $\ZZ[x,
x^{-1}]$-modules. So condition~(\ref{it:main C* K-th}) holds if and only if $K_0(C^*(E \times_1
\ZZ)) \cong K_0(C^*(F \times_1 \ZZ))$ as ordered $\ZZ[x, x^{-1}]$-modules.

By \cite[Theorem~3.4 and Corollary~3.5]{HLMRT} (see also \cite{AG}), for any directed graph $E$
there is an isomorphism $K_0(L_{\KK}(E)) \cong K_0(C^*(E))$ that carries the class of the module
$L_{\KK}(E) v$ to the class of the projection $p_v$ in $C^*(E)$ for each $v \in E^0$. It follows
that $K_0(L_{\KK}(E \times_1 \ZZ)) \cong K_0(C^*(E \times_1 \ZZ))$ as ordered $\ZZ[x,
x^{-1}]$-modules. This shows that conditions (\ref{it:main alg K-th})~and~(\ref{it:main C* K-th})
are equivalent. So it now suffices to show that (\ref{it:main alg K-th})~implies~(\ref{it:main
graph iso}).

So suppose that~(\ref{it:main alg K-th}) holds. Since $E$, and therefore $E \times_1 \ZZ$, is an
amplified graph, it admits no breaking vertices with respect to any saturated hereditary set, and
every hereditary subset of $E \times_1 \ZZ$ is a saturated hereditary subset. So the lattice
$\Hh(E \times_1 \ZZ)$ of hereditary sets is identical to the lattice of admissible pairs in the
sense of \cite{Tomforde} via the map $H \mapsto (H, \emptyset)$. By \cite[Theorem~5.11]{AHLS},
there is a lattice isomorphism from $\Hh(E \times_1 \ZZ)$ to the lattice of order ideals of
$K_0(L_{\KK}(E \times_1 \ZZ))$ that carries a hereditary set $H$ to the class of the module
$L_{\KK}(E \times_1 \ZZ)H$. this isomorphism clearly intertwines the action of $\ZZ$ induced by
the module structure on $K_0(L_{\KK}(E \times_1 \ZZ))$ and the action $\lt^E$ of $\ZZ$ on $\Hh(E
\times_1 \ZZ)$ induced by translation. By the same argument applied to $F$, we see that
$\big(\Hh(E \times_1 \ZZ), \subseteq, \lt^E) \cong \big(\Hh(F \times_1 \ZZ), \subseteq,
\lt^F\big)$.

Now Corollary~\ref{cor:H carried} implies that $\big(\Hh(E \times_1 \ZZ), \lt^E, H^E_0) \cong
\big(\Hh(F \times_1 \ZZ), \lt^F, H^F_0\big)$. This isomorphism induces an isomorphism
$\overline{E} \cong \overline{F}$ of the graphs constructed from these data in
Proposition~\ref{prp:key}. Thus two applications of Proposition~\ref{prp:key} give $E \cong
\overline{E} \cong \overline{F} \cong F$, which is~(\ref{it:main graph iso}).
\end{proof}

\section{Equivariant \texorpdfstring{$K$}{K}-theory and graded \texorpdfstring{$K$}{K}-theory are stable invariants}

In this section, we prove that equivariant $K$-theory and graded $K$-theory are stable invariants.
We suspect that these are well-known results but we have been unable to find a reference in the
literature.  For the convenience of the reader, we include their proofs here. We use these results
to deduce the consequences of Theorem~\ref{thm-main} for graded stable isomorphisms of amplified
Leavitt path algebras, and gauge-equivariant stable isomorphisms of amplified graph
$C^*$-algebras.

\subsection{Stability of equivariant \texorpdfstring{$K$}{K}-theory}

\begin{thm}\label{t:julgs-theorem}
Let $G$ be a compact group and let $\alpha$ be an action of $G$ on a $C^*$-algebra $A$.  If $A$
has an increasing approximate identity consisting of $G$-invariant projections, then the natural
$R(G)$-module isomorphism from $K_0^G ( A, \alpha )$ to $K_0 ( C^*(G, A , \alpha ))$ is an order
isomorphism.
\end{thm}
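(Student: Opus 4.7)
The plan is to use the Green--Julg theorem, which provides the natural $R(G)$-module isomorphism $\Phi: K_0^G(A, \alpha) \to K_0(C^*(G, A, \alpha))$, and to verify that $\Phi$ preserves positive cones in both directions. The forward direction is almost immediate given the hypothesis; the real work lies in the backward direction, and the approximate identity of $G$-invariant projections is essential for both.

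For the forward direction, write $(p_i)$ for the approximate identity of $G$-invariant projections and set $A_i := p_i A p_i$. Each $A_i$ is a $G$-invariant unital $C^*$-subalgebra of $A$ with $A = \varinjlim_i A_i$, and since $p_i$ is $G$-invariant each $A_i \rtimes_\alpha G$ embeds as a unital corner of $A \rtimes_\alpha G$ with $A \rtimes_\alpha G = \varinjlim_i (A_i \rtimes_\alpha G)$. Consequently any $G$-invariant projection $p \in M_n(A)$ satisfies $p = p p_i p \in M_n(A_i) \subseteq M_n(A \rtimes_\alpha G)$ for large $i$, and on representatives Green--Julg sends $[p] \in K_0^G(A, \alpha)$ to the class of $p$ viewed as a projection in $M_n(A \rtimes_\alpha G)$. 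Hence $\Phi$ carries the positive cone into the positive cone.

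For the backward direction I would exploit the continuity of $K_0^G$ and $K_0$ as ordered-group-valued functors under inductive limits, together with the naturality of $\Phi$, to reduce to the case where $A$ is unital with $G$-invariant unit. In that setting one can upgrade Green--Julg to an equivalence of categories between $G$-equivariant finitely generated projective Hilbert $A$-modules and finitely generated projective Hilbert $A \rtimes_\alpha G$-modules; this descends to a bijection of Murray--von Neumann classes, so every projection in $M_n(A \rtimes_\alpha G)$ has $K_0$-class represented by a $G$-invariant projection in some $M_k(A)$, yielding the required surjectivity onto the positive cone. The main obstacle will be carrying this unital case out precisely: one must verify that the Julg functor (sending $E$ to $E$ with $A \rtimes_\alpha G$-action built from the given $G$-action) and its averaging inverse restrict cleanly to the finitely generated projective subcategories, and that the induced monoid isomorphism $V^G(A) \cong V(A \rtimes_\alpha G)$ agrees with $\Phi$ on representatives. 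The Bochner-integral bookkeeping needed to maintain positivity and $R(G)$-linearity simultaneously is where the delicate work lies, although no ideas beyond Julg's original argument are required.
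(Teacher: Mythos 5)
Your overall architecture coincides with the paper's: prove the unital case by observing that Julg's isomorphism is implemented by a correspondence of finitely generated projective modules (hence respects positive cones), then use the increasing approximate identity of $G$-invariant projections, continuity of both $K$-functors under inductive limits, and naturality of the Julg map to reduce the general case to corners $pAp$. This is exactly how the paper proceeds, quoting \cite[Lemma~2.4.2, Theorem~2.6.1]{NCP-LectureNotes} for the unital case and \cite[Corollary~2.5.5]{NCP-LectureNotes} for the corner reduction.

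There is, however, a genuine error in how you describe positive equivariant classes, and it occurs in both directions of your argument. For compact $G$, the positive cone of $K_0^G(A,\alpha)$ is \emph{not} the set of classes of $G$-invariant projections in $M_n(A)$ (with the entrywise action): it consists of classes of finitely generated projective $(G,A)$-modules, equivalently of invariant projections in $M_n(A \otimes \Bb(V))$ where $V$ runs over finite-dimensional unitary representations of $G$, and the twist by $V$ cannot be removed. Already for $A = \CC$ and $G = \TT$ acting trivially, $K_0^\TT(\CC) = R(\TT) = \ZZ[x,x^{-1}]$ has positive cone the $\NN$-span of the classes of \emph{all} irreducibles $x^n$, whereas invariant projections in $M_n(\CC)$ produce only $\NN \cdot [\text{triv}]$; this is precisely the representation-theoretic content (the weights) that the paper's main theorem exploits, so it cannot be ignored here. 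Consequently, checking positivity of $\Phi$ on invariant projections in $M_n(A)$, as your forward direction does, does not establish $\Phi\big(K_0^G(A,\alpha)_+\big) \subseteq K_0(C^*(G,A,\alpha))_+$, and in your backward direction the phrase ``represented by a $G$-invariant projection in some $M_k(A)$'' names too small a class of representatives. (A smaller point: $p = p\,p_i\,p$ for large $i$ is false for an approximate identity of projections --- one only gets $p_i p p_i \to p$ and must either perturb or, better, invoke \cite[Corollary~2.5.5]{NCP-LectureNotes}.) The repair is to drop representatives altogether and run your backward-direction scheme symmetrically in both directions: in the unital case the Julg isomorphism comes from a monoid bijection $V^G(pAp) \cong V(C^*(G, pAp, \alpha))$ and is therefore an order isomorphism outright, and continuity plus naturality then transport both inclusions to $A$ --- which is precisely the paper's proof.
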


\begin{proof}
First suppose $A$ has a unit.  Then the theorem follows from the proof of Julg's Theorem
\cite{Julg} (see also \cite[Theorem~2.7.9]{NCP-LectureNotes}).  The isomorphism is given by the
composition of two isomorphisms:
\begin{align*}
K_0^G( A, \alpha  ) &\to K_0 ( L^1 ( G, A , \alpha) ) \quad \text{and}\\
K_0 ( L^1 ( G, A , \alpha) ) &\to K_0 ( C^*( G, A, \alpha ) ).
\end{align*}
The proof that these maps are isomorphisms shows that the maps are order isomorphisms (see the
proof of \cite[Lemma~2.4.2 and Theorem~2.6.1]{NCP-LectureNotes}).

Now suppose $A$ has an increasing approximate identity $S$ consisting of $G$-invariant
projections. Fix $p \in S$. Let
\begin{align*}
\lambda_A &\colon K_0^G(A , \alpha) \to K_0 ( C^*(G, A, \alpha ) ),\text{ and}\\
\lambda_p &\colon K_0 ^G( p A p, \alpha ) \to K_0 ( C^*( G, p A p , \alpha ) ),\quad p \in S
\end{align*}
be the natural $R(G)$-isomorphisms given in Julg's Theorem.  Note that $\alpha$ does indeed induce
an action on $pAp$ since $p$ is $G$-invariant.  Let $\iota_p$ be the $G$-equivariant inclusion of
$pAp$ into $A$ and let $\widetilde{\iota_p}$ be the induce $*$-homomorphism from $C^*(G, p A p,
\alpha )$ to $C^*(G, A , \alpha )$.

Let $x \in K_0^G(A, \alpha)_+$.  By \cite[Corollary~2.5.5]{NCP-LectureNotes}, there exist $p \in
S$ and $x' \in K_0^G( p A p, \alpha )_+$ such that $(\iota_p)_*( x' ) = x$. Naturality of the maps
$\lambda_A$ and $\lambda_p$ gives $\lambda_A (x) = (\widetilde{\iota}_p)_* \circ \lambda_p ( x')$.
Consequently, $\lambda_A (x) \in K_0 ( C^*(G, A, \alpha ) )_+$ since $(\widetilde{\iota}_p)_*
\circ \lambda_p ( x') \in K_0 ( C^*(G, A, \alpha ) )_+$.  Fix $y \in K_0 ( C^*(G, A, \alpha )
)_+$. For $f \in L^1(G)$ and $a \in A$ we write $f \otimes a : G \to A$ for the function
$(f\otimes a)(g) = f(g)a$. Since $S$ is an approximate identity of $A$ and since
\[
\{ f \otimes a : f \in L^1(G), a \in A \}
\]
is dense in $C^*(G, A, \alpha )$, the set $\bigcup_{ p \in S } \widetilde{ \iota_p } ( C^*( G, p A
p , \alpha ) )$ is dense in $C^*(G, A, \alpha )$. Thus, there exists a projection $p \in S$ and
there exists $y' \in K_0 ( C^*(G, p A p , \alpha ) )_+$ such that $(\widetilde{\iota_p})_*(y' ) =
x$. Since $\lambda_p$ is an order isomorphism, $\lambda_p^{-1} ( y' ) \in K_0^G( p A p , \alpha
)_+$. Then $( \iota_p )_* \circ \lambda_p^{-1} ( y' ) \in K_0^G( A, \alpha )_+$.  Naturality of
the maps $\lambda_A$ and $\lambda_p$ implies that $\lambda_A \circ ( \iota_p )_* \circ
\lambda_p^{-1} ( y' ) = y$.  We have shown that $\lambda_A ( K_0^G(A , \alpha )_+ ) = K_0 ( C^*(G,
A, \alpha ))_+$ which implies that $\lambda_A$ is an order isomorphism.
\end{proof}

\begin{lem}\label{l:fullher}
Let $G$ be a compact group and let $A$ be a separable $C^*$-algebra and let $\alpha$ be an action
of $G$ on $A$.  If $B$ is a hereditary subalgebra of $A$ such that
\begin{enumerate}
\item $B$ has an increasing approximate identity of $G$-invariant projections,

\item $A$ has an increasing approximate identity of $G$-invariant projections,

\item $\overline{A B A } =A$, and

\item $\alpha_g ( B ) \subseteq B$ for all $g \in G$,
\end{enumerate}
then the inclusion $\iota\colon B \to A$ induces an isomorphism $K_0^G (B) \cong K_0^G(A)$ of
ordered $R(G)$-modules.
\end{lem}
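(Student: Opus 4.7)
The plan is to reduce via Theorem~\ref{t:julgs-theorem} to a statement about full hereditary subalgebras of separable $C^*$-algebras, then invoke Brown's stable isomorphism theorem. First, I would apply Theorem~\ref{t:julgs-theorem} to both $A$ (using hypothesis~(2)) and to $B$ (using hypothesis~(1)) to obtain natural ordered $R(G)$-module isomorphisms
\[
\lambda_A : K_0^G(A, \alpha) \to K_0(C^*(G, A, \alpha)) \quad \text{and} \quad \lambda_B : K_0^G(B, \alpha|_B) \to K_0(C^*(G, B, \alpha|_B)).
\]
Since $\iota$ is $G$-equivariant by hypothesis~(4), it induces a $*$-homomorphism $\tilde\iota : C^*(G, B, \alpha|_B) \to C^*(G, A, \alpha)$, and naturality of Julg's isomorphism gives $\lambda_A \circ \iota_* = \tilde\iota_* \circ \lambda_B$. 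Thus it suffices to show that $\tilde\iota_*$ is an order isomorphism on $K_0$.

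The heart of the argument is to verify that $C^*(G, B, \alpha|_B)$ sits inside $C^*(G, A, \alpha)$ as a full hereditary $C^*$-subalgebra. Hereditariness follows from the $G$-invariance in hypothesis~(4) together with $BAB \subseteq B$ (the standard equivalent formulation of $B$ being hereditary in $A$): on the dense subalgebra $L^1(G, A)$, a direct computation with the twisted convolution product shows $L^1(G, B) * L^1(G, A) * L^1(G, B) \subseteq L^1(G, B)$, and passing to norm closures gives the hereditary condition. Fullness uses hypothesis~(3): for an element $f \otimes a$ in a dense subspace of $C^*(G, A, \alpha)$, the condition $\overline{ABA} = A$ allows us to approximate $a$ by finite sums $\sum_k a_k b_k a_k'$ with $b_k \in B$, and lifting this decomposition to the crossed product expresses $f \otimes a$ as a norm limit of sums $\sum_k x_k y_k z_k$ with $x_k, z_k \in C^*(G, A, \alpha)$ and $y_k \in C^*(G, B, \alpha|_B)$.

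Finally, since $A$ is separable and $G$ is compact, $C^*(G, A, \alpha)$ is separable, and Brown's stable isomorphism theorem together with Morita invariance of ordered $K_0$ gives that the full hereditary inclusion $\tilde\iota$ induces an order isomorphism on $K_0$. Combined with the Julg isomorphisms $\lambda_A$ and $\lambda_B$, this yields the desired order isomorphism $\iota_* : K_0^G(B) \to K_0^G(A)$ of ordered $R(G)$-modules. The main technical hurdle is the fullness step: one must faithfully translate the condition $\overline{ABA} = A$ for $B \subseteq A$ into the corresponding statement for the crossed products, whereas the hereditariness verification and the concluding invocation of Brown's theorem are largely formal.
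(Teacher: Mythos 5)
Your proposal follows essentially the same route as the paper's proof: apply Theorem~\ref{t:julgs-theorem} to both $A$ and $B$, use naturality of the Julg isomorphisms to reduce to showing that $\widetilde{\iota}_*$ is an order isomorphism, and then conclude from the fact that $C^*(G,B,\alpha)$ sits as a full hereditary subalgebra inside $C^*(G,A,\alpha)$. The only difference is one of packaging: the paper cites \cite[Proposition~2.9.1]{NCP-LectureNotes} for the hereditariness and fullness of the crossed-product inclusion, whereas you sketch a direct verification of those two facts before invoking Brown's theorem.
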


\begin{proof}
Since $B$ is $G$-invariant, $\alpha$ is also an action on $B$ and the inclusion $\iota$ is
$G$-equivariant.  Let $\lambda_B \colon K^G_0 ( B , \alpha ) \to K_0 ( C^*( G, B , \alpha ) )$ and
$\lambda_A \colon K_0^G ( A ) \to K_0 ( C^*( G, A , \alpha ) )$ be the natural $R(G)$-module order
isomorphisms given in Theorem~\ref{t:julgs-theorem}.  Naturality of $\lambda_B$ and $\lambda_A$
implies that the diagram
\[
\xymatrix{
K_0^G ( B ) \ar[r]^{ \iota_* }\ar[d]_{\lambda_B} & K_0^G(A) \ar[d]^{\lambda_A} \\
K_0 ( C^*( G, B, \alpha ) ) \ar[r]_{ \widetilde{\iota}_* } & K_0 ( C^* ( G, A , \alpha ) )
}
\]
is commutative. As in the proof of \cite[Proposition~2.9.1]{NCP-LectureNotes}, $C^*( G, B, \alpha
)$ is a hereditary subalgebra of $C^*( G, A, \alpha )$ such that the closed two-sided ideal of
$C^*(G, A, \alpha )$ generated by $C^*( G, B, \alpha )$ is $C^*(G, A,\alpha)$. This
$\widetilde{\iota}_*$ is an order isomorphism, and so $\iota_*$ is also an order isomorphism.
\end{proof}

The corollary below implies that the equivariant $K_0$-group is a stable invariant.

\begin{cor}\label{cor-stabilizeeqkthy}
Let $G$ be a compact group, let $\alpha$ be an action of $G$ on a separable $C^*$-algebra $A$, and
let $\beta$ be an action of $G$ on $\Kk(\ell^2)$.  If both $A$ and $\Kk(\ell^2)$ admit increasing
approximate identities consisting of $G$-invariant projections, then there is a $R(G)$-module
order isomorphism from $K_0^G (A, \alpha )$ to $K_0^G( A \otimes \Kk(\ell^2) , \alpha \otimes
\beta)$.

In particular, if $u : G \to \Uu(\ell^2)$ is a continuous (in the strong operator topology)
unitary representation of $G$ and $\beta_g = \mathrm{Ad}(u_g)$, then there is a $R(G)$-module
order isomorphism from $K_0^G (A, \alpha )$ and $K_0^G( A \otimes \Kk(\ell^2) , \alpha \otimes
\beta)$
\end{cor}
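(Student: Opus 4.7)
The plan is to apply Lemma~\ref{l:fullher} to a well-chosen $G$-invariant hereditary subalgebra of $A \otimes \Kk(\ell^2)$, and then to identify the equivariant $K_0$ of that subalgebra with $K_0^G(A)$ via an untwisting argument at the level of crossed products. First, I would pick a nonzero $G$-invariant projection $p \in \Kk(\ell^2)$ from the given approximate identity; since $p$ is compact it has finite rank $n$, so $p\Kk(\ell^2)p \cong M_n(\CC)$ carries a restricted $G$-action $\gamma := \beta|_{p\Kk(\ell^2)p}$. Setting $B := A \otimes p\Kk(\ell^2)p$, the hypotheses of Lemma~\ref{l:fullher} are routine to verify: writing $(e_m)$ for the given $G$-invariant approximate identity of $A$ and $(p_m)$ for that of $\Kk(\ell^2)$, the families $(e_m \otimes p)$ and $(e_m \otimes p_m)$ are $G$-invariant approximate identities of projections for $B$ and $A \otimes \Kk(\ell^2)$ respectively; $B$ is hereditary and $(\alpha \otimes \beta)$-invariant in $A \otimes \Kk(\ell^2)$ because $p$ is $G$-invariant and $p\Kk p$ is hereditary in $\Kk$; and $\overline{(A \otimes \Kk)\,B\,(A \otimes \Kk)} = A \otimes \Kk(\ell^2)$ because $\Kk(\ell^2)$ is simple. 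Lemma~\ref{l:fullher} then yields an $R(G)$-module order isomorphism $K_0^G(B) \cong K_0^G(A \otimes \Kk(\ell^2))$.

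The main obstacle, and the core of the proof, is the second step: showing $K_0^G(A \otimes M_n, \alpha \otimes \gamma) \cong K_0^G(A, \alpha)$ as ordered $R(G)$-modules. For this I would use that every continuous action of a compact group by automorphisms of $M_n$ is inner, so $\gamma_g = \mathrm{Ad}(v_g)$ for a continuous map $v \colon G \to U_n$, possibly only a projective representation. The unitary on $L^2(G, A \otimes \CC^n)$ defined by $(Wf)(g) := (1 \otimes v_g^*)f(g)$ should extend to a $\ast$-isomorphism $C^*(G, A \otimes M_n, \alpha \otimes \gamma) \cong C^*(G, A, \alpha) \otimes M_n$, with any cocycle terms of $v$ cancelling under the conjugation. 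Combined with Theorem~\ref{t:julgs-theorem} applied on both sides and the order-preserving stability isomorphism $K_0(R) \cong K_0(R \otimes M_n)$, this gives the desired $R(G)$-module order isomorphism. The delicate points are the cocycle cancellation and checking that the $R(G)$-action and positive cone are respected throughout the chain of isomorphisms.

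For the ``in particular'' clause: if $\beta = \mathrm{Ad}(u)$ for a strongly continuous unitary representation $u \colon G \to \Uu(\ell^2)$, then $\ell^2$ decomposes as an orthogonal direct sum of finite-dimensional $G$-irreducible subspaces, and the partial sums of the corresponding orthogonal projections form an increasing approximate identity of $G$-invariant projections in $\Kk(\ell^2)$. So the hypothesis of the main statement is automatic and the general result applies.
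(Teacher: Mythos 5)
Your first step is sound, and it is in fact more careful than the paper's own argument: the paper applies Lemma~\ref{l:fullher} to $A \otimes p_1 = \{a \otimes p_1 : a \in A\}$, which is hereditary in $A \otimes \Kk(\ell^2)$ only when $p_1$ has rank one (in general $(a \otimes p_1)(b \otimes k)(c \otimes p_1) = abc \otimes p_1 k p_1$, and $p_1 \Kk(\ell^2) p_1 \neq \CC p_1$), whereas your $B = A \otimes p\Kk(\ell^2)p$ is hereditary whatever the rank of $p$. The genuine gap is in your second step: the cocycle terms do \emph{not} cancel. If $\gamma = \mathrm{Ad}(v)$ with $v$ an $\omega$-projective representation, then carrying out your conjugation, the group side of the convolution picks up the factors $v_h^* v_g = \omega(h, h^{-1}g)\, v_{h^{-1}g}$, so what one obtains is an isomorphism of $C^*(G, A \otimes M_n, \alpha \otimes \gamma)$ with the \emph{$\omega$-twisted} crossed product of $(A, \alpha)$ by $G$, tensored with $M_n$ --- not with $C^*(G, A, \alpha) \otimes M_n$. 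When $[\omega] \neq 0$ in $H^2(G, \TT)$ these algebras can have different $K$-theory, so this is not a technicality that can be waved through.

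Indeed, no argument can close this gap, because the first assertion of Corollary~\ref{cor-stabilizeeqkthy} is false in the stated generality. Take $G = \ZZ/2 \times \ZZ/2$, let $\sigma : G \to \Uu(\CC^2)$ be the Pauli projective representation (whose cocycle class is nontrivial), and let $\beta = \mathrm{Ad}(\sigma) \otimes \mathrm{id}$ on $M_2 \otimes \Kk(\ell^2) \cong \Kk(\ell^2)$, with $A = \CC$ and $\alpha$ trivial. The projections $1_{M_2} \otimes q_n$, for $(q_n)$ any increasing approximate identity of projections in $\Kk(\ell^2)$, are $G$-invariant, so all hypotheses hold; but $C^*(G, \Kk(\ell^2), \beta) \cong (M_2 \rtimes_{\mathrm{Ad}\sigma} G) \otimes \Kk(\ell^2) \cong (M_2 \otimes M_2) \otimes \Kk(\ell^2)$, so Theorem~\ref{t:julgs-theorem} gives $K_0^G(\Kk(\ell^2), \beta) \cong \ZZ$, while $K_0^G(\CC) \cong R(G) \cong \ZZ^4$. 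This example is exactly your situation: the minimal $G$-invariant projections have rank two, and the corner action on $p\Kk(\ell^2)p \cong M_2$ is implemented only projectively. What survives --- and is all the paper uses downstream, in Theorem~\ref{thm:C*-consequences} --- is the ``in particular'' clause, where $\beta = \mathrm{Ad}(u)$ for a genuine strongly continuous representation $u$. There your strategy does work, with one observation: a $G$-invariant $p$ lies in the commutant of $u(G)$, so $v_g := u_g p$ is a \emph{genuine} representation on $p\ell^2$, the cocycle is trivial, $\alpha \otimes \gamma$ is exterior equivalent to $\alpha \otimes \mathrm{id}$ via the unitary cocycle $g \mapsto 1 \otimes v_g$, and Theorem~\ref{t:julgs-theorem} plus stability of ordered $K_0$ finishes the argument. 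So you should restrict your claim to that case (or add the hypothesis that $\beta$ is implemented by a genuine unitary representation); note that the paper's own proof requires the same repair.
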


\begin{proof}
Let $\{ p_n \}_{ n \in \NN}$ be an increasing approximate identity consisting of $G$-invariant
projections in $\Kk(\ell^2)$.  We may assume $p_1 \neq 0$.  Then $A \otimes p_1$ is a
$G$-invariant hereditary subalgebra of $A \otimes \Kk(\ell^2)$ such that $\overline{( A \otimes
\Kk(\ell^2)) (A \otimes p_1 )( A \otimes \Kk(\ell^2) ) } = A \otimes \Kk(\ell^2)$.  From the
assumption on $A$ and $\Kk(\ell^2)$, both $A \otimes p_1$ and $A \otimes \Kk(\ell^2)$ have
increasing approximate identities consisting of $G$-invariant projections.  Lemma~\ref{l:fullher}
implies that there is an $R(G)$-module order isomorphism from $K_0^G(A \otimes p_1, \alpha \otimes
\beta )$ to $K_0^G (A \otimes \Kk(\ell^2) , \alpha \otimes \beta)$.  The result now follows since
the map $a \mapsto a \otimes p_1$ is a $G$-equivariant $*$-isomorphism from $A$ to $A\otimes p_1$.

For the last part of the theorem, since $G$ is compact, $u$ is a direct sum of finite dimensional
representations.  Thus, $\Kk(\ell^2)$ has an increasing approximate identity consisting of
$G$-invariant projections.
\end{proof}

To finish this subsection, we describe the consequences of Theorem~\ref{thm-main} for equivariant
stable isomorphism of amplified graph $C^*$-algebras. For the following theorem, given a
strong-operator continuous unitary representation $u : \TT \to \Uu(\ell^2)$ of $\TT$ on a Hilbert
space $H$, we will write $\beta^u$ for the action of $\TT$ on $\Bb(\ell^2)$ given by $\beta^u_z =
\mathrm{Ad}(u_z)$.

\begin{thm}\label{thm:C*-consequences}
Let $E$ and $F$ be countable amplified graphs. Then the following are equivalent:
\begin{enumerate}
\item\label{it:C* graphs iso} $E \cong F$;
\item\label{it:C* iso} $(C^*(E), \gamma^E) \cong (C^*(F), \gamma^F)$;
\item\label{it:C* stable any u} $(C^*(E) \otimes \Kk, \gamma^E \otimes \beta^u) \cong (C^*(F)
    \otimes \Kk, \gamma^F \otimes \beta^u)$, for every strongly continuous representation $u :
    \TT \to \Uu(\ell^2)$;
\item\label{it:C* stable some u} there exists a strongly continuous unitary representation $u :
    \TT \to \Uu(\ell^2)$ such that $(C^*(E) \otimes \Kk, \gamma^E \otimes \beta^u) \cong (C^*(F)
    \otimes \Kk, \gamma^F \otimes \beta^u)$; and
\item\label{it:C* stable some u,v} there exist strongly continuous unitary representations $u,v
    : \TT \to \Uu(\ell^2)$ such that $(C^*(E) \otimes \Kk, \gamma^E \otimes \beta^u) \cong
    (C^*(F) \otimes \Kk, \gamma^F \otimes \beta^v)$.
\end{enumerate}
\end{thm}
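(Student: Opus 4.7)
The plan is to organise the equivalences as a cycle $(\ref{it:C* graphs iso}) \Rightarrow (\ref{it:C* iso}) \Rightarrow (\ref{it:C* stable any u}) \Rightarrow (\ref{it:C* stable some u}) \Rightarrow (\ref{it:C* stable some u,v}) \Rightarrow (\ref{it:C* graphs iso})$, in which the first four implications are essentially formal and the closing implication is where the work of the section pays off. For $(\ref{it:C* graphs iso})\Rightarrow(\ref{it:C* iso})$ I will observe that a graph isomorphism $E\cong F$ carries a universal gauge-equivariant Cuntz--Krieger $E$-family to a universal gauge-equivariant Cuntz--Krieger $F$-family, so the universal property delivers a $*$-isomorphism $C^*(E)\cong C^*(F)$ intertwining $\gamma^E$ and $\gamma^F$. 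For $(\ref{it:C* iso})\Rightarrow(\ref{it:C* stable any u})$ I will tensor such a gauge-equivariant isomorphism with the identity of $(\Kk,\beta^u)$. The implications $(\ref{it:C* stable any u})\Rightarrow(\ref{it:C* stable some u})$ and $(\ref{it:C* stable some u})\Rightarrow(\ref{it:C* stable some u,v})$ require nothing but specialising to a single $u$ or taking $v=u$.

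The substantive step is $(\ref{it:C* stable some u,v})\Rightarrow(\ref{it:C* graphs iso})$. I will first note that $C^*(E)$ admits an increasing approximate identity of gauge-invariant projections, namely the net $\bigl\{\sum_{v\in F}p_v : F\subseteq E^0\text{ finite}\bigr\}$, since each $p_v$ is fixed by $\gamma^E$; the same holds for $C^*(F)$. So the hypotheses of Corollary~\ref{cor-stabilizeeqkthy} are met with $G=\TT$, yielding $R(\TT)$-module order isomorphisms
\[
K_0^\TT(C^*(E),\gamma^E) \cong K_0^\TT(C^*(E)\otimes\Kk,\gamma^E\otimes\beta^u)
\]
and
\[
K_0^\TT(C^*(F),\gamma^F) \cong K_0^\TT(C^*(F)\otimes\Kk,\gamma^F\otimes\beta^v).
\]
Identifying $R(\TT)$ with $\ZZ[x,x^{-1}]$ and composing these isomorphisms with the hypothesised order isomorphism between the stabilised equivariant $K_0$-groups, I obtain an ordered $\ZZ[x,x^{-1}]$-module isomorphism $K_0^\TT(C^*(E),\gamma^E)\cong K_0^\TT(C^*(F),\gamma^F)$. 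The equivalence of (\ref{it:main C* K-th}) and (\ref{it:main graph iso}) in Theorem~\ref{thm-main} then delivers $E\cong F$.

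I anticipate no serious obstacle: the genuine content of the theorem has already been packaged into Theorem~\ref{thm-main} (reconstruction of $E$ from ordered $\TT$-equivariant $K_0$) and Corollary~\ref{cor-stabilizeeqkthy} (invariance of $K_0^\TT$ under equivariant stabilisation by any strongly continuous unitary representation). The only items needing explicit verification are the existence of the approximate identity of gauge-invariant projections in $C^*(E)$ and the identification of $R(\TT)$ with $\ZZ[x,x^{-1}]$ as ordered rings; both are standard and will be handled in a sentence each.
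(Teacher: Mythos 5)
Your proposal is correct and follows essentially the same route as the paper: the formal implications $(\ref{it:C* graphs iso})\Rightarrow(\ref{it:C* iso})\Rightarrow(\ref{it:C* stable any u})\Rightarrow(\ref{it:C* stable some u})\Rightarrow(\ref{it:C* stable some u,v})$, followed by two applications of Corollary~\ref{cor-stabilizeeqkthy} and an appeal to Theorem~\ref{thm-main} for $(\ref{it:C* stable some u,v})\Rightarrow(\ref{it:C* graphs iso})$. Your explicit verification that $\bigl\{\sum_{v\in X}p_v : X\subseteq E^0\text{ finite}\bigr\}$ is an increasing approximate identity of gauge-invariant projections is a hypothesis of Corollary~\ref{cor-stabilizeeqkthy} that the paper leaves implicit, so including it is a small improvement rather than a deviation.
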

\begin{proof}
If $\phi : E \to F$ is an isomorphism, it induces an isomorphism $C^*(E) \cong C^*(F)$, which is
gauge invariant because it carries generators to generators. This gives \mbox{(\ref{it:C* graphs
iso})\;$\implies$\;(\ref{it:C* iso})}.

If~(\ref{it:C* iso}) holds, say $\phi : C^*(E) \to C^*(F)$ is a gauge-equivariant isomorphism,
then for any $u$ the map $\phi \otimes \id_\Kk$ is an equivariant isomorphism from $(C^*(E)
\otimes \Kk, \gamma^E \otimes \beta^u)$ to $(C^*(F) \otimes \Kk, \gamma^F \otimes \beta^u)$,
giving~(\ref{it:C* stable any u}). Clearly~(\ref{it:C* stable any u}) implies~(\ref{it:C* stable
some u}). And if~(\ref{it:C* stable some u}) holds for a given $u : \TT \to \Bb(\ell^2)$,
then~(\ref{it:C* stable some u,v}) holds with $u = v$. Finally, if~(\ref{it:C* stable some u,v})
holds, then two applications of Corollary~\ref{cor-stabilizeeqkthy} show that
\begin{align*}
K_0^\TT(C^*(E), \gamma^E) &\cong K_0^\TT( C^*(E) \otimes \Kk(\ell^2) , \gamma^E \otimes \beta^u)\\
    &\cong K_0^\TT( C^*(F) \otimes \Kk(\ell^2) , \gamma^F \otimes \beta^v)
    \cong K_0^\TT(C^*(F), \gamma^F)
\end{align*}
as ordered $\ZZ[x, x^{-1}]$-modules, and so Theorem~\ref{thm-main} gives~(\ref{it:C* graphs iso}).
\end{proof}

\begin{rmk}\label{rm:ER conjecture}
In this remark, we use the notation, moves, and drawing conventions of \cite{ER}; we refer the
reader there for details.

Combined with the results of others, Theorem~\ref{thm:C*-consequences} confirms, for the class of
amplified graphs, \cite[Conjecture~5.1]{ER}. The conjecture states that for all $\mathsf{xyz}$
other than $\mathsf{010}$ and $\mathsf{101}$, the equivalence relation
$\overline{\underline{\mathsf{x\smash{\mathsf{y}}z}}}$ is generated by the moves from
$\{\texttt{(O)}, \texttt{(I-)}, \texttt{(I+)}, \texttt{(R+)}, \texttt{(S)}, \texttt{(C+)},
\texttt{(P+)}\}$ that preserve it.

Theorem~\ref{thm:C*-consequences} shows that for amplified graphs,
\[
(E, F) \in \overline{\underline{\mathsf{010}}} \implies E \cong F \implies (E, F) \in \overline{\underline{\mathsf{111}}}
\]
Since we trivially have $\overline{\underline{\mathsf{111}}} \subseteq
\overline{\underline{\mathsf{x1z}}} \subseteq \overline{\underline{\mathsf{010}}}$ for all
$\mathsf{x,z}$, we deduce that the four equivalence relations
$\overline{\underline{\mathsf{x1z}}}$ are identical and coincide with graph isomorphism. In
particular, for amplified graphs, each $\overline{\underline{\mathsf{x1z}}}$ is trivially
contained in the relation generated by the moves that preserve it. For the reverse containment,
note that the only moves in the list above that preserve any $\mathsf{x1z}$-equivalences are
\texttt{(O), (I+)} and \texttt{(I-)}. Of these, neither \texttt{(I+)} nor \texttt{(I-)} can be
applied to an amplified graph, and \cite[Theorem~3.2]{ER} shows that $\langle\texttt{(O)}\rangle
\subseteq \overline{\underline{\mathsf{x1z}}}$ for all $x,z$. So we confirm
\cite[Conjecture~5.1]{ER} for amplified graphs for the relations
$\overline{\underline{\mathsf{x1z}}}$.

We now show that a similar result holds for the relations $\overline{\underline{\mathsf{x0z}}}$.
Recall from \cite{ERS-amplified} that if $E$ is an amplified graph then its amplified transitive
closure $tE$ is the amplified graph with $tE^0 = E^0$ and $v(tE^1)w \not= \emptyset$ if and only
if $vE^*w \setminus \{v\}\not= \emptyset$. Theorem~1.1 of \cite{ERS-amplified} shows that for
amplified graphs, if $(E, F) \in \overline{\underline{\mathsf{000}}}$, then $tE \cong tF$. We
claim that this forces $(E, F) \in \overline{\underline{\mathsf{101}}}$. To see this, first note
that by \cite[Theorems 3.2~and~3.10]{ER}, moves \texttt{(0)} and \texttt{(R+)} preserve
$\overline{\underline{\mathsf{101}}}$. So it suffices to show that the graph move $t$ that, given
vertices $u,v,w$ such that $u E^1 v$ and $vE^1 w$ are infinite, adds infinitely many new edges to
$u E^1 w$, can be obtained using \texttt{(0)} and \texttt{(R+)}. This is achieved as follows:
\begin{equation}\label{eq:ORRO}
\tikzset{infedge/.style={
    arrows={-Implies},
    double,
    double distance = 1.5pt
}}
\parbox[c]{0.8\textwidth}{
\begin{tikzpicture}[xscale=1.2]
    \node[circle, inner sep=1.5pt, fill=black] (u0) at (0.5,0) {};
    \node[circle, inner sep=1.5pt, fill=red] (v0) at (0,1) {};
    \node[circle, inner sep=1.5pt, fill=black] (w0) at (0.5,2) {};
    \draw[infedge] (u0)--(v0);
    \draw[infedge] (v0)--(w0);
    \node at (1,1) {\color{red}$\stackrel{\texttt{(O)}}{\rightsquigarrow}$};
    \node[circle, inner sep=1.5pt, fill=black] (u1) at (2,0) {};
    \node[circle, inner sep=1.5pt, fill=black] (v1l) at (1.5,1) {};
    \node[circle, inner sep=1.5pt, fill=green!50!black] (v1r) at (2.5,1) {};
    \node[circle, inner sep=1.5pt, fill=black] (w1) at (2,2) {};
    \draw[infedge] (u1)--(v1l);
    \draw[infedge] (u1)--(v1r);
    \draw[infedge] (v1l)--(w1);
    \draw[->] (v1r)--(w1);
    \node at (3,1) {\color{green!50!black}$\stackrel{\texttt{(R+)}}{\rightsquigarrow}$};
    \node[circle, inner sep=1.5pt, fill=black] (u2) at (4,0) {};
    \node[circle, inner sep=1.5pt, fill=black] (v2l) at (3.5,1) {};
    \node[circle, inner sep=1.5pt, fill=black] (v2r) at (4.5,1) {};
    \node[circle, inner sep=1.5pt, fill=black] (w2) at (4,2) {};
    \draw[infedge] (u2)--(v2l);
    \draw[infedge] (u2)--(w2);
    \draw[infedge] (v2l)--(w2);
    \draw[->] (v2r)--(w2);
    \node at (5,1) {\color{green!50!black}$\stackrel{\texttt{(R+)}}{\leftsquigarrow}$};
    \node[circle, inner sep=1.5pt, fill=black] (u3) at (6,0) {};
    \node[circle, inner sep=1.5pt, fill=black] (v3l) at (5.5,1) {};
    \node[circle, inner sep=1.5pt, fill=green!50!black] (v3r) at (6.5,1) {};
    \node[circle, inner sep=1.5pt, fill=black] (w3) at (6,2) {};
    \draw[infedge] (u3)--(v3l);
    \draw[infedge] (u3)--(w3);
    \draw[infedge] (u3)--(v3r);
    \draw[infedge] (v3l)--(w3);
    \draw[->] (v3r)--(w3);
    \node at (7,1) {\color{red}$\stackrel{\texttt{(O)}}{\leftsquigarrow}$};
    \node[circle, inner sep=1.5pt, fill=black] (u4) at (8,0) {};
    \node[circle, inner sep=1.5pt, fill=red] (v4) at (7.5,1) {};
    \node[circle, inner sep=1.5pt, fill=black] (w4) at (8,2) {};
    \draw[infedge] (u4)--(v4);
    \draw[infedge] (v4)--(w4);
    \draw[infedge] (u4)--(w4);
\end{tikzpicture}}
\end{equation}
So as above, for amplified graphs, we see that the four equivalence relations
$\overline{\underline{\mathsf{x0z}}}$ are identical, coincide with isomorphism of amplified
transitive closures of the underlying graphs, and are generated by \texttt{(O)} and \texttt{(R+)},
and in particular by the moves from \cite{ER} that are $\mathsf{x0z}$-invariant. The results of
\cite{ER} give the reverse containment, so we have confirmed \cite[Conjecture~5.1]{ER} for
amplified graphs for the relations $\overline{\underline{\mathsf{x0z}}}$.
\end{rmk}

\subsection{Stability of graded algebraic \texorpdfstring{$K_0$}{K0}}\label{sec:stabilizations}

Next we establish the stable invariance of graded $K$-theory.  Let $\Gamma$ be an additive abelian
group and let $A$ be a $\Gamma$-graded ring.  For $\overline{\delta} \in \Gamma^n$, we write $M_n
(A) ( \overline{\delta})$ for the $\Gamma$-graded ring $M_n (A)$ with grading given by $( a_{i, j}
) \in M_n (A)_\lambda$ if and only if $a_{i, j} \in A_{\lambda + \delta_j - \delta_i}$. Similarly,
for $\overline{\delta} \in \prod_n \Gamma$, we write $M_\infty (A) ( \overline{\delta})$ for the
$\Gamma$-graded ring $M_\infty (A)$ with grading given by $( a_{i, j} ) \in M_\infty (A) (
\overline{\delta})_\lambda$ if and only if $a_{i, j} \in  A_{\lambda + \delta_j - \delta_i}$.

Since the tensor product of two graded modules will be key in the proof, we recall the construct
given in \cite[Section~1.2.6]{RH-graded}.  Let $\Gamma$ be an additive abelian group, let $A$ be a
$\Gamma$-graded ring, let $M$ be a graded right $A$-module, and let $N$ be a graded left
$A$-module.  Then $M \otimes_A N$ is defined to be $M \otimes_{A_0} N$ modulo the subgroup
generated by
\[
\{ m a \otimes n - m \otimes an : \text{$m \in M, n \in N,$ and $a \in A$ are homogeneous} \}
\]
with grading induced by the grading on $M \otimes_{A_0} N$ given by
\[
(M \otimes_{ A_0 } N )_\lambda = \left\{ \sum_{ i } m_i \otimes n_i : \text{$m_i \in M_{\alpha_i}, n_i \in N_{\beta_i}$ with $\alpha_i + \beta_i = \lambda$ } \right\}.
\]

\begin{thm}\label{thm:full-corner}
Let $\Gamma$ be an additive abelian group, let $A$ be a unital $\Gamma$-graded ring, and let
$\overline{\delta} = ( \delta_1, \delta_2, \ldots, \delta_n ) \in \Gamma^n$.  Then the inclusion
$\iota \colon A \to M_n ( A ) ( \overline{\delta} )$ into the $e_{1,1}$ corner induces a $\ZZ[
\Gamma ]$-module order isomorphism $K_0^{\gr}( \iota ) \colon K_0^{\gr} (A) \to K_0^{\gr}( M_n ( A
) ( \overline{\delta} ) )$ given by $K_0^{\gr}( \iota ) ([P]) = [ P \otimes_{ A } M_n ( A ) (
\overline{\delta} ) ]$ (the left $A$-module structure on $M_n ( A ) ( \overline{\delta} )$ is
given by the inclusion $\iota$).
\end{thm}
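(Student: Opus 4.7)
The plan is to realize $\iota$ as the inclusion of a full corner cut by a degree-zero idempotent, and then invoke graded Morita theory. First I would observe that, in the grading convention of the theorem, each matrix unit $e_{i,j}$ is homogeneous of degree $\delta_i - \delta_j$; in particular $e_{1,1}$ is a homogeneous idempotent of degree $0$, and $\iota$ is a graded ring isomorphism from $A$ onto the graded corner $e_{1,1} M_n(A)(\overline{\delta}) e_{1,1}$. The identity $\sum_i e_{i,1} \cdot e_{1,1} \cdot e_{1,i} = \sum_i e_{i,i} = 1$ shows that $e_{1,1}$ is a \emph{full} idempotent, so that $M_n(A)(\overline{\delta}) \cdot e_{1,1} \cdot M_n(A)(\overline{\delta}) = M_n(A)(\overline{\delta})$.

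Second, I would appeal to the graded version of the standard corner-embedding theorem (see, for example, \cite[Chapter~2]{RH-graded}): such an embedding of a full corner by a degree-zero idempotent induces a graded Morita equivalence between $A$ and $M_n(A)(\overline{\delta})$. The equivalence is implemented by the graded bimodules $e_{1,1} M_n(A)(\overline{\delta})$ and $M_n(A)(\overline{\delta}) e_{1,1}$, with the $A$-actions coming from the corner identification via $\iota$, and produces mutually inverse functors $P \mapsto P \otimes_A M_n(A)(\overline{\delta})$ and $Q \mapsto Q e_{1,1}$ between the categories of graded right modules. These restrict to equivalences on graded finitely generated projective modules, and so induce the map $K_0^{\gr}(\iota)$ of the statement, with inverse $[Q] \mapsto [Q e_{1,1}]$.

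Finally I would verify $\ZZ[\Gamma]$-equivariance and that $K_0^{\gr}(\iota)$ is an order isomorphism. Equivariance is immediate from the natural identification
\[
P(\lambda) \otimes_A M_n(A)(\overline{\delta}) \cong (P \otimes_A M_n(A)(\overline{\delta}))(\lambda),
\]
which is built into the definition of the grading on the graded tensor product. For the order structure, both $K_0^{\gr}(\iota)$ and its candidate inverse $[Q] \mapsto [Qe_{1,1}]$ are induced by functors sending graded finitely generated projective modules to graded finitely generated projective modules, so both carry positive cone into positive cone.

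The principal technical obstacle is bookkeeping with the shifts $\delta_i$: one must check that $e_{1,1} M_n(A)(\overline{\delta})$ and $M_n(A)(\overline{\delta}) e_{1,1}$ carry the correct graded bimodule structure so that the natural multiplication maps $(P \otimes_A M_n(A)(\overline{\delta})) e_{1,1} \to P$ and $(Qe_{1,1}) \otimes_A M_n(A)(\overline{\delta}) \to Q$ are isomorphisms of \emph{graded} modules. Unwinding the definitions, this reduces to the decomposition $M_n(A)(\overline{\delta}) e_{1,1} \cong \bigoplus_i A(\delta_1 - \delta_i)$ as a graded right $A$-module together with a direct verification that, under the full-idempotent hypothesis, the composite of the two tensor functors is naturally isomorphic to the identity as a graded functor.
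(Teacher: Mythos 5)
Your proof is correct, and it rests on the same underlying mechanism as the paper's --- a graded Morita equivalence implemented by the row and column bimodules over the degree-zero idempotent $e_{1,1}$ --- but the packaging is genuinely different. The paper does not argue via general corner theory: it first normalizes the shift vector, setting $\overline{\alpha} = (0, \delta_2 - \delta_1, \ldots, \delta_n - \delta_1)$, quotes the matrix-specific result \cite[Corollary~2.1.2]{RH-graded} to obtain a suspension-commuting equivalence $P \mapsto P \otimes_A A^n(\overline{\alpha})$ from graded projective $A$-modules to graded projective $M_n(A)(\overline{\alpha})$-modules, and then observes that $M_n(A)(\overline{\alpha}) = M_n(A)(\overline{\delta})$ as graded rings, since the grading depends only on the differences $\delta_j - \delta_i$. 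The remaining work in the paper is to identify this equivalence with $K_0^{\gr}(\iota)$, which is done via the graded right-module isomorphism $e_{1,1}M_n(A)(\overline{\delta}) \cong A^n(\overline{\alpha})$ given by reading off the first row. Your route trades those two observations for the burden of verifying the graded full-corner Morita theorem itself (fullness of $e_{1,1}$, and that both composites of your two functors are naturally graded-isomorphic to the identity functors); you correctly flag this as the main technical point, and your sketch of it is sound --- note that your computation $M_n(A)(\overline{\delta})e_{1,1} \cong \bigoplus_i A(\delta_1 - \delta_i)$ is exactly where the paper's $\overline{\alpha}$ reappears in disguise. What your approach buys is generality and transparency: it works verbatim for any full homogeneous degree-zero idempotent in any graded ring, not just matrix rings, and the explicit inverse $[Q] \mapsto [Q e_{1,1}]$ makes the order-isomorphism claim immediate, since both directions visibly send classes of graded finitely generated projective modules to classes of the same kind. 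What the paper's approach buys is economy: all the Morita machinery is quoted rather than re-verified, and the only hands-on steps are the equality of the two gradings on $M_n(A)$ and the first-row identification.
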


\begin{proof}
Let $\overline{\alpha} = (0, \delta_2 - \delta_1, \ldots, \delta_n - \delta_1)$.  By
\cite[Corollary~2.1.2]{RH-graded}, there is an equivalence of categories $\phi \colon
\mathrm{Pgr}\hyphen A \to \mathrm{Pgr}\hyphen M_n ( A ) ( \overline{\alpha} )$ given by $\phi ( P
) = P \otimes_A A^n ( \overline{\alpha} )$.   Moreover, $\phi$ commutes with the suspension map.
Since
\begin{align*}
M_n (A) ( \overline{\alpha}  )_\lambda &= \begin{pmatrix}  A_{ \lambda  } & A_{ \lambda + \alpha_2 - \alpha_1} & \cdots & A_{ \lambda + \alpha_n - \alpha_1 } \\
A_{ \lambda + \alpha_1 - \alpha_2  } & A_{ \lambda } & \cdots & A_{ \lambda + \alpha_n - \alpha_2 } \\
\vdots & \vdots & \ddots & \vdots \\
A_{ \lambda + \alpha_1 - \alpha_n  } & A_{ \lambda + \alpha_2 - \alpha_n } & \cdots & A_{ \lambda  }
\end{pmatrix} \\
&=
 \begin{pmatrix}  A_{ \lambda  } & A_{ \lambda + \delta_2 - \delta_1} & \cdots & A_{ \lambda + \delta_n - \delta_1 } \\
A_{ \lambda + \delta_1 - \delta_2  } & A_{ \lambda } & \cdots & A_{ \lambda + \delta_n - \delta_2 } \\
\vdots & \vdots & \ddots & \vdots \\
A_{ \lambda + \delta_1 - \delta_n  } & A_{ \lambda + \delta_2 - \delta_n } & \cdots & A_{ \lambda  }
\end{pmatrix}
= M_n (A) ( \overline{\delta}  )_\lambda,
\end{align*}
we have $M_n (A)( \overline{\alpha} ) = M_n ( A) ( \overline{\delta} )$.  Therefore, $\phi ( P ) =
P \otimes_A A^n ( \overline{\alpha} )$ is an equivalence of categories from $\mathrm{Pgr}\hyphen
A$ to $\mathrm{Pgr}\hyphen M_n ( A ) ( \overline{\delta} )$ and $\phi$ commutes with the
suspension map.  Hence, $\phi$ induces a $\ZZ[ \Gamma ]$-module order isomorphism from $K_0^{\gr}
( A )$ to $K_0^{\gr} ( M_n ( A ) ( \overline{\delta} ) )$.

We claim that $\phi = K_0^{\gr} ( \iota )$.  Let $M$ be a graded right $A$-module.  We will show
that $M \otimes_A A^n ( \overline{\alpha} )$ and $M \otimes_{ A } M_n ( A ) ( \overline{\delta} )$
are isomorphic as graded modules.  Since $1_A \in A_0$ and $M1_A = M$,
\[
M \otimes_{ A } M_n ( A ) ( \overline{\delta} ) \cong_{\gr} M \otimes_{ A } \iota ( 1_A)  M_n ( A ) ( \overline{\delta} ) = M \otimes_{ A } e_{1,1} M_n ( A ) ( \overline{\delta} ).
\]
By the definitions of the gradings on $e_{1,1} M_n ( A ) ( \overline{\delta} )$ and $A^n ( \alpha
)$, the right $M_n (A)$-module isomorphism
\[
e_{1,1} X \mapsto ( x_{1,1} , x_{1, 2} , \ldots, x_{1, n } )
\]
is a graded isomorphism.  Hence,
\[
M \otimes_{ A } M_n ( A ) ( \overline{\delta} ) \cong_{\gr} M \otimes_{ A } e_{1,1} M_n ( A ) ( \overline{\delta} ) \cong_{\gr} M \otimes_{A} A^n ( \alpha ).
\]
Thus, $\phi = K_0^{\gr} ( \iota )$.  Consequently, $K_0^{\gr} ( \iota )$ is a $\ZZ [ \Gamma
]$-module order isomorphism.
\end{proof}

\begin{cor}\label{cor:stabilize-gradedkthy}
Let $\Gamma$ be an additive abelian group and let $A$ be a $\Gamma$-graded ring with a sequence of
idempotents $\{e_n\}_{n=1}^\infty \subseteq A_0$ such that $e_n e_{n+1} = e_n$ for all $n$, and
$\bigcup_{n} e_n Ae_n = A$. For $\overline{\delta} \in \prod_i \Gamma$, the embedding $\iota
\colon A \to M_\infty (A)( \overline{\delta})$ into the $e_{1,1}$ corner of $M_\infty ( A ) (
\overline{\delta} )$ induces a $\ZZ [ \Gamma ]$-module order isomorphism $K_0^{\gr} (\iota )
\colon K^{\gr} ( A ) \to K_0^{\gr} ( M_\infty (A)( \overline{\delta}) )$.

In particular, if $E$ is a countable directed graph and $\overline{\delta} \in \prod_i \ZZ$, then
the inclusion of $\iota \colon L_\KK (E) \to M_\infty ( L_\KK (E))( \overline{\delta})$ of $L_\KK
(E)$ into the $e_{1,1}$ corner of $M_\infty ( L_\KK (E))( \overline{\delta})$ induces a $\ZZ[x,
x^{-1} ]$-module order isomorphism from $K_0^{\gr} ( L_\KK (E) )$ to $K_0^{\gr}(  M_\infty ( L_\KK
(E))( \overline{\delta}) )$ for any field $\KK$.
\end{cor}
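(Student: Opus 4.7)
The plan is to reduce to the unital finite-matrix case of Theorem~\ref{thm:full-corner} by a direct limit argument. For each $n$, let $f_n \in M_\infty(A)(\overline{\delta})$ denote the diagonal idempotent with $e_n$ in the first $n$ diagonal positions and $0$ elsewhere. Since $e_n \in A_0$, each diagonal entry lies in $A_{0 + \delta_i - \delta_i} = A_0$, so $f_n$ is homogeneous of degree $0$. The corner $B_n := f_n M_\infty(A)(\overline{\delta}) f_n$ is then naturally identified as a $\Gamma$-graded ring with $M_n(e_n A e_n)(\delta_1, \ldots, \delta_n)$. The relation $e_n e_{n+1} = e_n$ gives $B_n \subseteq B_{n+1}$, and since every element of $M_\infty(A)(\overline{\delta})$ is a finitely supported matrix whose entries all lie in $\bigcup_m e_m A e_m = A$, we get $M_\infty(A)(\overline{\delta}) = \bigcup_n B_n$; similarly $A = \bigcup_n e_n A e_n$.

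Since $e_n A e_n$ is unital with unit $e_n \in A_0$, Theorem~\ref{thm:full-corner} applied to $e_n A e_n$ and the tuple $(\delta_1, \ldots, \delta_n)$ yields a $\ZZ[\Gamma]$-module order isomorphism $K_0^{\gr}(e_n A e_n) \to K_0^{\gr}(B_n)$ induced by the inclusion into the $e_{1,1}$ corner. These isomorphisms are natural with respect to the inclusions $e_n A e_n \hookrightarrow e_{n+1} A e_{n+1}$ and $B_n \hookrightarrow B_{n+1}$. Since $K_0^{\gr}$, together with its $\ZZ[\Gamma]$-action and positive cone, commutes with directed unions of graded rings (all three structures are defined in terms of graded finitely generated projective modules and are degree-wise in nature), passing to the colimit yields the desired isomorphism $K_0^{\gr}(\iota) : K_0^{\gr}(A) \to K_0^{\gr}(M_\infty(A)(\overline{\delta}))$.

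For the Leavitt path algebra statement, one only needs to check the hypotheses. Enumerate $E^0 = \{v_1, v_2, \ldots\}$ and set $e_n := \sum_{i=1}^n p_{v_i}$. The $p_v$ are mutually orthogonal degree-$0$ idempotents, so $e_n$ is an idempotent in $(L_\KK(E))_0$ satisfying $e_n e_{n+1} = e_n$. Every element of $L_\KK(E)$ is a finite $\KK$-linear combination of monomials $\mu\nu^*$, and $\mu\nu^* = p_{s(\mu)} \mu\nu^* p_{s(\nu)}$ lies in $e_n L_\KK(E) e_n$ as soon as $\{v_1, \ldots, v_n\}$ contains both $s(\mu)$ and $s(\nu)$. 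Hence $\bigcup_n e_n L_\KK(E) e_n = L_\KK(E)$, and the general statement applies with $\Gamma = \ZZ$. The main obstacle in the whole plan is justifying that $K_0^{\gr}$ commutes with directed unions not merely as an abelian group but as an ordered $\ZZ[\Gamma]$-module; this is a standard continuity property (see \cite{RH-graded}) but it is the one step that is not purely formal from Theorem~\ref{thm:full-corner}, and it is what forces the hypothesis of local units $\{e_n\} \subseteq A_0$.
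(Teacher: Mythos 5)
Your proposal is correct and takes essentially the same approach as the paper's proof: both reduce to the unital finite-matrix case of Theorem~\ref{thm:full-corner} and then pass to the limit using that $K_0^{\gr}$, as an ordered $\ZZ[\Gamma]$-module, respects direct limits (\cite[Theorem~3.2.4]{RH-graded}), and both use the same vertex-sum idempotents $e_n$ for the Leavitt path algebra statement. The only organizational difference is that you take a single diagonal limit over the corners $M_n(e_n A e_n)(\delta_1, \dots, \delta_n)$, whereas the paper runs two successive limits, first reducing to unital $A$ via $A = \varinjlim e_n A e_n$ and then writing $M_\infty(A)(\overline{\delta}) = \varinjlim M_n(A)(\overline{\delta}_n)$ over matrix sizes.
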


\begin{proof}
Let $\iota_n \colon e_n A e_n \to M_{\infty} ( e_n A e_n )(\overline{\delta})$ be the inclusion of
$e_n Ae_n$ into the $e_{1,1}$ corner of $M_{\infty} ( e_n A e_n )(\overline{\delta})$.  Observe
that $A = \varinjlim e_n A e_n$, that $M_\infty ( A ) = \varinjlim M_\infty ( e_n A e_n )$, and
that the diagram
\[
\xymatrix{
e_n A e_n \ar[r]^{\subseteq} \ar[d]_{ \iota_n} & A \ar[d]^{\iota} \\
M_\infty ( e_n A e_n ) (\overline{\delta}) \ar[r]^{\subseteq}  & M_\infty ( A  ) ( \overline{\delta})
}
\]
commutes. Therefore, if each $K_0^{\gr}(\iota_n)$ is a $\ZZ [ \Gamma ]$-module order isomorphism,
then $K_0^{\gr} (\iota)$ is a $\ZZ [ \Gamma ]$-module order isomorphism since the graded
$K_0$-group respects direct limits (\cite[Theorem~3.2.4]{RH-graded}).  Hence, without loss of
generality, we may assume that $A$ is a unital $\Gamma$-graded ring.

Let $\overline{\delta}_n = ( \delta_1, \delta_2, \ldots, \delta_n )$. Let $j_n \colon A \to M_n (
A )( \overline{\delta}_n )$ be the inclusion of $A$ into the $e_{1,1}$ corner of $M_n ( A )(
\overline{\delta}_n )$, and let $\iota_n : M_n(A)(\overline{\delta}_n) \to
M_\infty(A)(\overline{\delta})$ be the inclusion map. Then $\varinjlim M_n ( A ) (
\overline{\delta}_n )  = M_\infty ( A ) ( \overline{\delta})$ and the diagram
\[
\xymatrix{
A  \ar[d]_{ j_n } \ar[rd]^{ \iota }  & \\
M_n ( A ) (\overline{\delta}_n ) \ar[r]^{\iota_n} & M_{\infty} (A) ( \overline{\delta} )
}
\]
commutes. By Theorem~\ref{thm:full-corner}, $K_0^{\gr} ( j_n )$ is a $\ZZ[ \Gamma ]$-module order
isomorphism. Since the graded-$K_0$ functor respects direct limits, $K_0^{\gr}(\iota)$ is $\ZZ[
\Gamma ]$-module order isomorphism.

For the last part of the corollary, let $\{ X_n \}$ be a sequence of finite subsets of $E^0$ such
that $X_n \subseteq X_{n+1}$ and $\bigcup_n X_n = E^0$.  Then $e_n := \sum_{ v \in X_n } v$
defines idempotents of degree zero such that $\bigcup_n e_n L_\KK ( E) e_n = L_\KK (E)$.
\end{proof}

As in the preceding subsection, we finish by describing the consequences of Theorem~\ref{thm-main}
for graded stable isomorphism of amplified Leavitt path algebras.

\begin{thm}\label{thm:LPA-consequences}
Let $E$ and $F$ be countable amplified graphs and let $\KK$ be a field. Then the following are
equivalent:
\begin{enumerate}
\item\label{it:LPA graphs iso} $E \cong F$
\item\label{it:LPA iso} $L_\KK(E) \cong^{\gr} L_\KK(F)$;
\item\label{it:LPA stable any delta} $L_\KK(E) \otimes M_\infty(\KK)(\overline{\delta})
    \cong^{\gr} L_\KK(F) \otimes M_\infty(\KK)(\overline{\delta})$ for every $\overline{\delta}
    \in \prod_i \ZZ$;
\item\label{it:LPA stable some delta} $L_\KK(E) \otimes M_\infty(\KK)(\overline{\delta})
    \cong^{\gr} L_\KK(F) \otimes M_\infty(\KK)(\overline{\delta})$ for some $\overline{\delta}
    \in \prod_i \ZZ$; and
\item\label{it:LPA stable some delta,eps} $L_\KK(E) \otimes M_\infty(\KK)(\overline{\delta})
    \cong^{\gr} L_\KK(F) \otimes M_\infty(\KK)(\overline{\varepsilon})$ for some
    $\overline{\delta}, \overline{\varepsilon} \in \prod_i \ZZ$.
\end{enumerate}
\end{thm}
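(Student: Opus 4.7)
The plan is to prove the circular chain of implications (\ref{it:LPA graphs iso})$\Rightarrow$(\ref{it:LPA iso})$\Rightarrow$(\ref{it:LPA stable any delta})$\Rightarrow$(\ref{it:LPA stable some delta})$\Rightarrow$(\ref{it:LPA stable some delta,eps})$\Rightarrow$(\ref{it:LPA graphs iso}), parallel to the proof of Theorem~\ref{thm:C*-consequences}. Most of the implications are either immediate or follow from general functoriality, so the real content sits at the last step, where Corollary~\ref{cor:stabilize-gradedkthy} and Theorem~\ref{thm-main} are combined.

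For (\ref{it:LPA graphs iso})$\Rightarrow$(\ref{it:LPA iso}): a graph isomorphism $\phi : E \to F$ induces a $*$-isomorphism between the presenting sets of generators of $L_\KK(E)$ and $L_\KK(F)$, and because it sends vertices to vertices and edges to edges it preserves the canonical $\ZZ$-grading in which vertex idempotents have degree $0$, edge generators have degree $1$, and ghost edges have degree $-1$. For (\ref{it:LPA iso})$\Rightarrow$(\ref{it:LPA stable any delta}): given a graded isomorphism $\psi : L_\KK(E) \to L_\KK(F)$, the map $\psi \otimes \id_{M_\infty(\KK)(\overline{\delta})}$ is a graded isomorphism between the tensor product algebras (the grading on a tensor product is defined componentwise, and $\id$ is trivially graded on the right factor). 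The implications (\ref{it:LPA stable any delta})$\Rightarrow$(\ref{it:LPA stable some delta})$\Rightarrow$(\ref{it:LPA stable some delta,eps}) are immediate, the latter by taking $\overline{\varepsilon} = \overline{\delta}$.

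The main work is in (\ref{it:LPA stable some delta,eps})$\Rightarrow$(\ref{it:LPA graphs iso}). First I would identify the graded tensor product $L_\KK(E) \otimes M_\infty(\KK)(\overline{\delta})$ with the matricial algebra $M_\infty(L_\KK(E))(\overline{\delta})$ as $\ZZ$-graded $\KK$-algebras; this is standard and follows from the definition of the tensor-product grading combined with the definition of the grading on $M_\infty(\KK)(\overline{\delta})$. Writing $\{X_n\}$ for an increasing sequence of finite subsets of $E^0$ with union $E^0$ and $e_n := \sum_{v \in X_n} v$, the sequence $\{e_n\}$ satisfies the hypotheses of Corollary~\ref{cor:stabilize-gradedkthy}, giving a $\ZZ[x,x^{-1}]$-module order isomorphism
\[
K_0^{\gr}\bigl(L_\KK(E)\bigr) \cong K_0^{\gr}\bigl(M_\infty(L_\KK(E))(\overline{\delta})\bigr) \cong K_0^{\gr}\bigl(L_\KK(E) \otimes M_\infty(\KK)(\overline{\delta})\bigr),
\]
and identically for $F$ with $\overline{\varepsilon}$ in place of $\overline{\delta}$. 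Hence a graded isomorphism $L_\KK(E) \otimes M_\infty(\KK)(\overline{\delta}) \cong^{\gr} L_\KK(F) \otimes M_\infty(\KK)(\overline{\varepsilon})$ induces a $\ZZ[x,x^{-1}]$-module order isomorphism $K_0^{\gr}(L_\KK(E)) \cong K_0^{\gr}(L_\KK(F))$, and then Theorem~\ref{thm-main} delivers $E \cong F$.

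The only point that requires a little care is the identification of the tensor-product grading with the $(\overline{\delta})$-grading on $M_\infty(L_\KK(E))$, but this is purely bookkeeping with matrix units. Once that is in hand, everything else is a direct appeal to results already established, so no new obstacle appears.
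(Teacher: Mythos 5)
Your proposal is correct and takes essentially the same route as the paper: the same chain of implications, with (\ref{it:LPA graphs iso})$\Rightarrow$(\ref{it:LPA iso})$\Rightarrow$(\ref{it:LPA stable any delta})$\Rightarrow$(\ref{it:LPA stable some delta})$\Rightarrow$(\ref{it:LPA stable some delta,eps}) handled by functoriality and the crucial step (\ref{it:LPA stable some delta,eps})$\Rightarrow$(\ref{it:LPA graphs iso}) obtained by combining Corollary~\ref{cor:stabilize-gradedkthy} with Theorem~\ref{thm-main}. The only difference is that you make explicit the graded identification $L_\KK(E) \otimes M_\infty(\KK)(\overline{\delta}) \cong M_\infty(L_\KK(E))(\overline{\delta})$, which the paper uses implicitly when passing between the tensor-product formulation of the theorem and the matrix-algebra formulation of the corollary; this is a harmless (indeed welcome) clarification.
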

\begin{proof}
The argument is very similar to that of Theorem~\ref{thm:C*-consequences}, so we summarise. Any
isomorphism of graphs induces a graded isomorphism of their Leavitt path algebras, and any graded
isomorphism $\phi : L_\KK(E) \cong L_\KK(F)$ amplifies to a graded isomorphism $\phi \otimes \id :
L_\KK(E) \otimes M_\infty(\KK)(\overline{\delta}) \cong L_\KK(F) \otimes
M_\infty(\KK)(\overline{\delta})$, giving \mbox{(\ref{it:LPA graphs
iso})\;$\implies$\;(\ref{it:LPA iso})\;$\implies$\;(\ref{it:LPA stable any delta})}. The
implications \mbox{(\ref{it:LPA stable any delta})\;$\implies$\;(\ref{it:LPA stable some
delta})\;$\implies$\;(\ref{it:LPA stable some delta,eps})} are trivial. The second statement of
Corollary~\ref{cor:stabilize-gradedkthy} shows that if~(\ref{it:LPA stable some delta,eps}) holds
then $K^{\gr}(L_\KK(E)) \cong K^{\gr}(L_\KK(F))$ as ordered $\ZZ[x, x^{-1}]$-modules, and then
Theorem~\ref{thm-main} gives~(\ref{it:LPA graphs iso}).
\end{proof}

\begin{rmk}\label{rmk:field indep}
Since statement~(\ref{it:LPA graphs iso}) of Theorem~\ref{thm:LPA-consequences} does not depend on
the field $\KK$, we deduce that each of the other four statements holds for some field $\KK$ if
and only if holds for every field $\KK$. In particular the graded-isomorphism problem for
amplified Leavitt path algebras is field independent, so it suffices, for example, to consider the
field $\mathbb{F}_2$.
\end{rmk}

\begin{rmk}\label{rmk:different stabilisations}
Let $E$ and $F$ be amplified graphs. Theorem~\ref{thm:C*-consequences} shows that the existence of
an isomorphism $(C^*(E) \otimes \Kk, \gamma^E \otimes \beta^u) \cong (C^*(F) \otimes \Kk, \gamma^F
\otimes \beta^u)$ for every $u$ is equivalent to the existence of such an isomorphism for some
$u$, and indeed to the existence of an isomorphism $(C^*(E) \otimes \Kk, \gamma^E \otimes \beta^u)
\cong (C^*(F) \otimes \Kk, \gamma^F \otimes \beta^v)$ for some $u,v$. All of these conditions are
formally weaker than the existence of isomoprhisms $(C^*(E) \otimes \Kk, \gamma^E \otimes \beta^u)
\cong (C^*(F) \otimes \Kk, \gamma^F \otimes \beta^v)$ for every pair of strongly continuous
representations $u,v : \TT \to \Uu(\ell^2)$, and this in turn is clearly equivalent to the
existence of an isomorphism $(C^*(E) \otimes \Kk, \gamma^E \otimes \beta^u) \cong (C^*(F) \otimes
\Kk, \gamma^F \otimes \id)$ for every $u$. So it is natural to ask for which amplified graphs $E,
F$ and which strongly continuous representations $u : \TT \to \Uu(\ell^2)$ we have $(C^*(E)
\otimes \Kk, \gamma^E \otimes \beta^u) \cong (C^*(F) \otimes \Kk, \gamma^F \otimes \id)$.

This is an intriguing question to which we do not know a complete answer, but we can certainly
show that the condition that $(C^*(E) \otimes \Kk, \gamma^E \otimes \beta^u) \cong (C^*(F) \otimes
\Kk, \gamma^F \otimes \id)$ for every $u$ is in general strictly stronger than the equivalent
conditions of Theorem~\ref{thm:C*-consequences}. Specifically, let $E = F$ be the directed graph
with $E^0 = \{v,w\}$ and $E^1 = \{e_n : n \in \NN\}$ with $s(e_n) = v$ and $r(e_n) = w$ for all
$\NN$. Then the only nonzero spectral subspaces for the gauge action on $C^*(E)$ are those
corresponding to $-1, 0, -1$, and so the same is true for the spectral subspaces of $C^*(E)
\otimes \Kk$ with respect to $\gamma^E \otimes \id$. On the other hand, if $u : \TT \to
B(\ell^2(\ZZ))$ is given by $u_z e_n = z^n e_n$, then each spectral subspace of $C^*(E) \otimes
\Kk$ for $\gamma^E \otimes \beta^u$ is nonempty, so $(C^*(E) \otimes \Kk, \gamma^E \otimes
\beta^u) \not\cong (C^*(E) \otimes \Kk, \gamma^E \otimes \id)$. We do not, however, know of an
example in which $C^*(E)$ is simple.

A similar question can be posed for amplified Leavitt path algebras: for which amplified graphs
$E, F$ and elements $\overline{\delta} \in \prod_i \ZZ$ do we have $L_\KK(E) \otimes
M_\infty(\KK)(\overline{\delta}) \cong^{\gr} L_\KK(F) \otimes M_\infty(\KK)(\overline{0})$? The
same example shows that the existence of such an isomorphism for every $\overline{\delta}$ is in
general strictly stronger than the equivalent conditions of Theorem~\ref{thm:LPA-consequences}.
\end{rmk}

\end{document}